\theoremstyle{plain}
\newtheorem{definition}{Definition}[section]
\newtheorem{corollary}[definition]{Corollary}
\newtheorem{prop}[definition]{Proposition}
\theoremstyle{definition}
\newtheorem{remark}[definition]{Remark}
\title{A Note on Random Sampling for Matrix Multiplication}
\author{Yue Wu \thanks{School of Engineering, University of Edinburgh, Edinburgh, UK, yue.wu@ed.ac.uk} }
\begin{document}

\maketitle

\begin{abstract}
This paper extends the framework of randomised matrix multiplication in \cite{DKM06} to a coarser partition and proposes an algorithm as a complement to B{\scriptsize ASIC}M{\scriptsize ATRIX}M{\scriptsize ULTIPLICATION} in \cite{DKM06},  especially when the optimal probability distribution of the latter algorithm is closed to uniform. The new algorithm increases the likelihood of getting a small approximation error in 2-norm and has the squared approximation error in Frobenious norm bounded by that from algorithm B{\footnotesize ASIC}M{\footnotesize ATRIX}M{\footnotesize ULTIPLICATION}.

 {\bf Keywords:} coarser partition, pairwise partition, randomised algorithms, Monte Carlo methods, matrix multiplication.\\
\vspace{0.0cm}

{\bf AMS subject classification (2010):} 65C05, 68W20, 65C50, 62P30.
\end{abstract}

\section{Introduction}\label{sec:intro}
In order to ease the heavy computational burden from massive matrix problems, randomised algorithms for matrix multiplication of large scale has been developed well in literature. For instance, the framework provided in \cite{DK01} and \cite{DKM06} shed a light on approximating matrix product $AB$ in spirit of Monte Carlo method, where $A$ is an $m\times n$ matrix and $B$ an $n\times \rho$ matrix, by randomly selecting  a small, manageable set of columns of $A$ and rows of $B$ under some pre-determined probability rule. Algorithms of this kind are generally pass-efficient, requiring only a constant number of passes over the matrix data for creating samples or sketches. To obtain a good approximation $\hat S$, the choice of probability rule and the column and row scalings are crucial ingredients. In particular, for B{\footnotesize ASIC}M{\footnotesize ATRIX}M{\footnotesize ULTIPLICATION} algorithm in \cite{DKM06}, which is widely discussed in literature, the optimal probability in the sense of minimizing the expected value of $\|AB-\hat S\|^2_F$ is given proportional to the column lengths of A and the row lengths of B. Here $\|\cdot\|_F$ denotes Frobenius norm given by
$$\|A\|_F=\sqrt{\sum_{i,j}A_{(i,j)}^2}=\sqrt{\text{Tr}\big(A^TA\big)}$$
for an arbitrary matrix $A$, where $A_{(i,j)}$ denotes the element of A on the $i$th row and the $j$th column, and $\text{Tr}(\cdot)$ denotes the matrix trace, i.e., the sum of the elements on the main diagonal.

There are other randomised algorithms for matrix multiplication in literature. Random walk are employed in \cite{CL97} and \cite{CL99} to approximate large elements of a matrix product. Low dimensional emmbeddings has be used in \cite{S06} to eliminate data dependence and provide more versatile, linear time pass effecient matrix computation. A sparse representation of linear operators for the approximation of matrix products is studied in \cite{BW08} via  the Nystr\"om extension of a certain positive definite kernel. Subject to the availiablity of information for matrix elements' distributions, an importance sampling strategy based on algorithm B{\footnotesize ASIC}M{\footnotesize ATRIX}M{\footnotesize ULTIPLICATION} is developed in \cite{ESS11} which minimizes the expected value of the variance. For a special case of approximating a Gram matrix $AA^T$, \cite{HI15} presents probabilistic bounds for the 2-norm relative error based on B{\footnotesize ASIC}M{\footnotesize ATRIX}M{\footnotesize ULTIPLICATION}, where the bounds depend on the stable rank or the rank of $A$, but not on the matrix dimensions. As this paper addresses issues arising from algorithm B{\footnotesize ASIC}M{\footnotesize ATRIX}M{\footnotesize ULTIPLICATION}  in \cite{DKM06}, we will now outline our concerns as follows.

The first purpose of this paper is to extend framwork in \cite{DKM06} to a more general setting, say, sampling strategies are no longer restricted to the finest partition of $1$ to $n$. In real problems, columns or rows of a data matrix may be grouped to keep certain property. Thus there is not reason to draw one column (or one row) of the group alone and leave the rest out. This idea is motivated by \cite{WP18}, where the authors consider sketching  the stiffness matrix from finite element Galerkin system on a 3 dimensional domain. The stiffness matrix can be expressed as a symmetric matrix $D^TD$, where $D$ is the concatenated matrix of gradients for linear shape functions with each gradient consisting of $3$ rows. Random sampling should be performed to select all $3$ rows of some linear shape function at the same time in order to preserve at least the positive semidefinite property of $D^TD$.

Section \ref{sec:RamSam} discusses a contruction of $\hat S$ based on an arbitrary partition. Key results like unbiased estimator (Proposition \ref{prop:unbias}) and the optimal probability (Proposition \ref{prop:minsigma}) can be achieved in a similar fashion as \cite{DKM06}. Furthermore, we bound the approximation error in 2-norm,  i.e., the induced norm or the spectrum norm for matrix operators (Euclidean norm for vector) defined by 
$$\|A\|_2: =\sup\Big\{\frac{\|A\mathbf{x}\|_2}{\|\mathbf{x}\|_2}\ \text{for}\ \mathbf{x}\neq 0 \Big\},$$
 in probability sense through noncommutative Bernstein inequality (see Theorem 4.5 in \cite{Mahoney16}), which suggests the optimal probability given in Proposition \ref{prop:minsigma} will guarantee the smallest probability bound. We also examine the spectrum norm of $\hat S$ in Proposition \ref{prop:max1} based on the optimal sampling probability and in Proposition \ref{prop:max} via cumulative binomial distribution if the sampling probability is uniform. At the end of Section \ref{sec:RamSam}, Algorithm \ref{alg:pairwisesampling2} is given for illustrating the random sampling procedure for matrix multiplication from an arbitrary partition.

Furthermore, in terms of the expected value of the squared approximation error in Frobenius norm, the performance are compared in Corollary \ref{cor:largestapproerror} across all sampling strategies with their corresponding optimal sampling distributions, where the sampling procedure based on the finest partition with its optimal probability gives the largest value. This would imply that sampling strategy from the finest partition is least efficient and also suggest a way to enhance the performance of  B{\footnotesize ASIC}M{\footnotesize ATRIX}M{\footnotesize ULTIPLICATION} in \cite{DKM06}, which is discussed in Section \ref{sec:pairwise}. 

Section \ref{sec:pairwise} considers a case when algorithm B{\footnotesize ASIC}M{\footnotesize ATRIX}M{\footnotesize ULTIPLICATION} converges in a slow manner. 
From the point of view of probability, algorithm B{\footnotesize ASIC}M{\footnotesize ATRIX}M{\footnotesize ULTIPLICATION} in one run conducts $c$ independent trials of selecting a single column of $A$ and row of $B$, where the product $AB$ is expressed as the sum of outer products of each single column and row. The distribution of the optimal probability influences the approximation performance with one realization. For example, in case of $B=A^T$, when a few columns of $A$ take much higher weights compared to the rest, it is more likely for those few columns to be drawn within $c$ times sampling from the optimal probability. Thus the outer products of those columns and their transposes
are more likely to be firstly recovered within $c$ trails. Meanwhile, 
the columns with relatively small or tiny weights result in small expected numbers of successful draws within $c$ trails. It implies that it is highly unlikely for those columns to be drawn in one run, thus it is hard to reconstruct the corresponding outer products. But this does not lead to significant deviation as columns with very small or tiny weights only constitute a small proportion of $AA^T$.
On the contrary, when all the columns of $A$ take more or less the same weight, the optimal probability is closed to uniform. Then in one pass, it is not certain that $\|AA^T-\hat S\|_F$ can be small enough with finite $c$ as all columns and rows are nearly equally to be selected. A random sampling strategy on pairwise partition, Algorithm \ref{alg:pairwisesampling1}, is introduced in Section \ref{sec:pairwise} in order to improve the relatively slow convergence to $AB$ caused by the nearly even optimal probability of B{\footnotesize ASIC}M{\footnotesize ATRIX}M{\footnotesize ULTIPLICATION}. The idea is to group columns of $A$ and rows of $B$ in pairs and produce sampling probability by simply adding up the original optimal probabilities, which does not require too much more work. By doing this, we have increased the likelihood of each column being selected and made $\hat S$ more informative within $c$ trails. Algorithm \ref{alg:pairwisesampling1} is preferable due to its better performance in getting a small approximation error in 2-norm with a larger probability (Corollary \ref{cor:argvar}) and bounding the expected value of squared approximation error in Frobenius norm by that from B{\footnotesize ASIC}M{\footnotesize ATRIX}M{\footnotesize ULTIPLICATION} (Corollary \ref{cor:lowerbound}). The best pairing strategy is also discussed among all possible pairwise partitions in terms of approximation error of 2-norm in one run. Besides pairing strategies, Grouping columns of $A$ and rows of $B$ in any other form can be shown to be at least better than B{\footnotesize ASIC}M{\footnotesize ATRIX}M{\footnotesize ULTIPLICATION}. Thus we are quite free to choose any coarser partition. A numerical experiment  is given in Section \ref{sec:ne} to validate results in Section \ref{sec:pairwise} and performances are compared between Algorithm \ref{alg:pairwisesampling1} with the best pairing strategy and B{\footnotesize ASIC}M{\footnotesize ATRIX}M{\footnotesize ULTIPLICATION} with the optimal probability. All the results are summarised in Section \ref{sec:con} and the proofs are postponed to Appendix for a clearer presentation.

\section{An extended framework for random sampling for matrix multiplication}\label{sec:RamSam}
Given an $m\times n$ matrix $A$ and an $n\times \rho$ matrix $B$. The product of the two matrices is of interest. Define the finest partition of $n$ integers as $\Theta$, i.e., $\Theta=\{\{1\},\{2\},\ldots,\{n\}\}$, and sequence the coarser partitions as $(\Theta^{(\kappa)})_\kappa$. Note that each $\Theta^{(\kappa)}$ can be constructed by possible combinations of elements in $\Theta$.

 Suppose we are fixed with a partition $\Theta^\kappa$ with $|\Theta^{(\kappa)}|=k$, then $\Theta^{(\kappa)}=\{\Theta^{(\kappa)}_1,\Theta^{(\kappa)}_2,\ldots,\Theta^{(\kappa)}_k\}$ with each element $\Theta^{(\kappa)}_\ell$ a collection of indices. Denote by $\Theta^{(\kappa)}_{\ell,i}$ the $i$th element of $\Theta^{(\kappa)}_\ell$. Assume there is a sampling probability $p=(p_\ell)_{\ell=1}^k$ with  $\sum_\ell p_\ell=1 $ such that for each $1\leq \ell\leq k$, $\Theta^{(\kappa)}_\ell$ can be drawn with the assigned positive probability $p_\ell$. Note this is equivalent to assigning $p$ to indices of $\Theta^{(\kappa)}$, i.e., $1,2,\ldots, k$. Then if collecting $c$ many index samples  based on $p$ as $\mathbf{r}=\{r_1,r_2,\ldots, r_c\}$ with $r_\ell\in\{1,2,\ldots,k\}$, an approximation to $AB$ can be constructed via the following expression
\begin{equation}\label{hatS}
\hat{S} = \frac 1 c \sum_{i=1}^c \frac{1}{p_{r_i}} A_{(,\Theta^{(\kappa)}_{r_i})}B_{(\Theta^{(\kappa)}_{r_i},)},
\end{equation}
where $A_{(,\Theta^{(\kappa)}_{r_j})}$ is a  $m\times |\Theta^{(\kappa)}_{r_j}| $ matrix with the subscript $(,\Theta^{(\kappa)}_{r_j})$ representing the columns with indices $\Theta^{(\kappa)}_{r_j}$ of this matrix and $B_{(\Theta^{(\kappa)}_{r_i},)}$ a $|\Theta^{(\kappa)}_{r_i}|\times\rho $ matrix with $(\Theta^{(\kappa)}_{r_i},)$ of $B$ representing the rows with indices $\Theta^{(\kappa)}_{r_i}$ of this matrix.
The random sampling procedure in \eqref{hatS} is then equivalent to the following form:
\begin{align}\label{hatSmatrix}
\hat S&= \frac 1 c \sum_{i=1}^c \frac{1}{p_{r_i}} A_{(,\Theta^{(\kappa)}_{r_i})}B_{(\Theta^{(\kappa)}_{r_i},)}=ADC^2D^TB,
\end{align}
where $C$ is a $  \big(\sum_{j=1}^c|\Theta^{(\kappa)}_{r_j}|\big)\times  \big(\sum_{j=1}^c|\Theta^{(\kappa)}_{r_j}|\big)$ diagonal matrix with diagonal entries defined by 
$$C_{\big(\sum_{j=1}^{i-1}|\Theta^{(\kappa)}_{r_j}|+d_i,\sum_{j=1}^{i-1}|\Theta^{(\kappa)}_{r_j}|+d_i\big)}=\frac{1}{\sqrt{cp_{r_i}}}$$ 
for $1\leq i\leq c$ and each $d_i\in\{1,2,\ldots, |\Theta^{(\kappa)}_{r_i}|\}$, and $D$ is a $n\times   \big(\sum_{j=1}^c|\Theta^{(\kappa)}_{r_j}|\big)$ sparse matrix with nonvanishing entries 
$$D_{\big(\Theta^{(\kappa)}_{r_i,d_i},\sum_{j=1}^{i-1}|\Theta^{(\kappa)}_{r_j}|+d_i\big)}=1$$ for each $1\leq i\leq c$ and $1\leq d_i\leq |\Theta^{(\kappa)}_{r_i}|$. $DC^2D^T$ indeed returns a $n\times n$ diagonal matrix with nonnegative entries $(\Theta^{(\kappa)}_{r_i,d_i},\Theta^{(\kappa)}_{r_i,d_i})$ for $1\leq i\leq c$ and $1\leq d_i\leq |\Theta^{(\kappa)}_{r_i}|$ taking value of a ratio of a scalar, which indicates how many times $\Theta^{(\kappa)}_{r_i}$ (or equivalently $r_i$) has been drawn, to $cp_{r_i}$, the expected number of times that $r_i$ has been drawn. 

The estimator $\hat S$ can be shown an unbiased estimator for $AB$ through a probabilistic argument.
\begin{prop}\label{prop:unbias} Given $\Theta^{(\kappa)}$ with $|\Theta^{(\kappa)}|=k$ and its sampling probability $p=(p_\ell)_{\ell=1}^k$. The estimator $\hat S$ constructed in \eqref{hatS} gives an unbiased estimator for $AB$ in the sense of $\mathbb{E}_{p}[\hat S]=AB$, where $\mathbb{E}_{p}$ is expectation under probability $p$.
\end{prop}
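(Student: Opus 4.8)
The plan is to combine linearity of expectation with the block/outer-product expansion of the matrix product. Write $\hat S = \frac{1}{c}\sum_{i=1}^c X_i$ with $X_i := \frac{1}{p_{r_i}} A_{(,\Theta^{(\kappa)}_{r_i})} B_{(\Theta^{(\kappa)}_{r_i},)}$. Since each index $r_i$ is drawn from $p$, the $X_i$ all share the same expectation (independence of the draws is not even needed for unbiasedness — only the marginal law of each $r_i$ enters), so $\mathbb{E}_p[\hat S] = \mathbb{E}_p[X_1]$.

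First I would evaluate $\mathbb{E}_p[X_1]$ by summing over the possible values of $r_1$: since $r_1=\ell$ occurs with probability $p_\ell$,
\[
\mathbb{E}_p[X_1] = \sum_{\ell=1}^k p_\ell\cdot\frac{1}{p_\ell}\,A_{(,\Theta^{(\kappa)}_\ell)}B_{(\Theta^{(\kappa)}_\ell,)} = \sum_{\ell=1}^k A_{(,\Theta^{(\kappa)}_\ell)}B_{(\Theta^{(\kappa)}_\ell,)},
\]
the cancellation of $p_\ell$ being exactly what the reweighting $1/(cp_{r_i})$ in \eqref{hatS} is designed to achieve, and positivity of every $p_\ell$ being what makes the reciprocal $1/p_\ell$ legitimate.

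The one genuine step is then to recognise the right-hand side as $AB$. For a single block, $A_{(,\Theta^{(\kappa)}_\ell)}B_{(\Theta^{(\kappa)}_\ell,)} = \sum_{i=1}^{|\Theta^{(\kappa)}_\ell|} A_{(,\Theta^{(\kappa)}_{\ell,i})}B_{(\Theta^{(\kappa)}_{\ell,i},)}$ by the definition of matrix multiplication in block form (a column block times the matching row block equals the sum of the rank-one outer products it contains). Summing over $\ell$ and using that $\{\Theta^{(\kappa)}_1,\dots,\Theta^{(\kappa)}_k\}$ partitions $\{1,\dots,n\}$, every integer $t$ contributes its outer product $A_{(,t)}B_{(t,)}$ exactly once, so the double sum equals $\sum_{t=1}^n A_{(,t)}B_{(t,)} = AB$, finishing the argument.

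An equivalent route I would mention is to start from the factored form \eqref{hatSmatrix}, $\hat S = A\,(DC^2D^T)\,B$ with $C,D$ deterministic given $\mathbf r$, and to show $\mathbb{E}_p[DC^2D^T] = I_n$: the matrix $DC^2D^T$ is diagonal, its $(t,t)$ entry equals $\frac1c\sum_{i=1}^c p_{r_i}^{-1}\mathbf{1}\{t\in\Theta^{(\kappa)}_{r_i}\}$, and this has expectation $\frac1c\sum_{i=1}^c\sum_{\ell:\,t\in\Theta^{(\kappa)}_\ell} p_\ell\cdot p_\ell^{-1} = 1$ because $t$ lies in a unique block. I do not foresee a real obstacle; the only thing demanding care is keeping the multi-index notation $\Theta^{(\kappa)}_{\ell,i}$ consistent when passing between the block form and the outer-product form.
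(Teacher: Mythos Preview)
Your argument is correct and follows essentially the same route as the paper: linearity of expectation, evaluation of the expectation of a single summand by summing over the possible indices, and then recognition of $\sum_{\ell=1}^k A_{(,\Theta^{(\kappa)}_\ell)}B_{(\Theta^{(\kappa)}_\ell,)}$ as $AB$. If anything, you spell out the last identification (via the partition and outer-product expansion) more carefully than the paper does, and your alternative route through $\mathbb{E}_p[DC^2D^T]=I_n$ is a nice equivalent formulation that the paper does not present.
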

An optimal choice for sampling probabilities can be made according to the result below:
\begin{prop}\label{prop:minsigma} Given $\Theta^{(\kappa)}$ with $|\Theta^{(\kappa)}|=k$, and its sampling probability $p=(p_\ell)_{\ell=1}^k$, then the expected value of the squared approximation error in Frobenius norm for approximating $\hat S$ via \eqref{hatS} is 
\begin{align}\label{eqn:var}
\mathbb{E}_{p}[\|AB-\hat S\|^2_F]=  \frac{1}{c}\sum_{\ell=1}^k\frac{1}{p_{\ell}}\big\|A_{(,\Theta^{(\kappa)}_\ell)}B_{(\Theta^{(\kappa)}_\ell,)}\big\|_F^2-\frac{\|AB\|_F^2}{c}.
\end{align}
The optimal sampling probability $p$ in the sense of minimizing $\mathbb{E}_{p}[\|AB-\hat S\|^2_F]$ is given by 
\begin{align}\label{optimalmatrix}
p^{(\kappa)}_\ell := \frac{\big\|A_{(,\Theta^{(\kappa)}_\ell)}B_{(\Theta^{(\kappa)}_\ell,)}\big\|_F}{\sum_{\ell=1}^k\big\|A_{(,\Theta^{(\kappa)}_\ell)}B_{(\Theta^{(\kappa)}_\ell,)}\big\|_F}, \ \ \text{for all}\ \ 1\leq \ell\leq k,
\end{align}
with corresponding expected squared Frobenius-norm error by
\begin{align}\label{eqn:optimalvar} 
V_{p^{(\kappa)}}(AB):=\mathbb{E}_{p^{(\kappa)}}[\|AB-\hat S\|^2_F]= \frac{1}{c}\big(\sum_{\ell=1}^k\big\|A_{(,\Theta^{(\kappa)}_\ell)}B_{(\Theta^{(\kappa)}_\ell,)}\big\|_F\big)^2-\frac{\|AB\|_F^2}{c}.
\end{align}
\end{prop}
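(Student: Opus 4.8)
The plan is to follow the two familiar steps of \cite{DKM06}, adapted to the block notation of \eqref{hatS}: first establish the variance identity \eqref{eqn:var} by an entrywise second-moment computation, then minimise the resulting expression over the probability simplex $\bigl\{p=(p_\ell)_{\ell=1}^k:\sum_\ell p_\ell=1,\ p_\ell>0\bigr\}$.

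For the first step, write $\hat S=\frac1c\sum_{i=1}^c X_i$, where $X_1,\dots,X_c$ are i.i.d.\ $m\times\rho$ random matrices with $X_i=\frac{1}{p_\ell}A_{(,\Theta^{(\kappa)}_\ell)}B_{(\Theta^{(\kappa)}_\ell,)}$ on the event $\{r_i=\ell\}$, which occurs with probability $p_\ell$. Since $\{\Theta^{(\kappa)}_\ell\}_{\ell=1}^k$ is a partition of $\{1,\dots,n\}$, regrouping the rank-one terms in $AB=\sum_{j=1}^n A_{(,j)}B_{(j,)}$ according to the blocks gives $\sum_{\ell=1}^k A_{(,\Theta^{(\kappa)}_\ell)}B_{(\Theta^{(\kappa)}_\ell,)}=AB$, so $\mathbb{E}_p[X_i]=AB$ (this is exactly Proposition \ref{prop:unbias}). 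Fix an entry $(s,t)$. Because $\hat S_{(s,t)}$ has mean $(AB)_{(s,t)}$ and the $X_i$ are independent,
\begin{align*}
\mathbb{E}_p\bigl[(AB-\hat S)_{(s,t)}^2\bigr]=\mathrm{Var}_p\bigl[\hat S_{(s,t)}\bigr]=\frac{1}{c}\,\mathrm{Var}_p\bigl[(X_1)_{(s,t)}\bigr]=\frac{1}{c}\Bigl(\mathbb{E}_p\bigl[(X_1)_{(s,t)}^2\bigr]-(AB)_{(s,t)}^2\Bigr),
\end{align*}
and $\mathbb{E}_p\bigl[(X_1)_{(s,t)}^2\bigr]=\sum_{\ell=1}^k p_\ell\cdot p_\ell^{-2}\bigl(A_{(,\Theta^{(\kappa)}_\ell)}B_{(\Theta^{(\kappa)}_\ell,)}\bigr)_{(s,t)}^2$. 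Summing these identities over all $(s,t)$ and using $\|M\|_F^2=\sum_{s,t}M_{(s,t)}^2$ on each block product and on $AB$ yields \eqref{eqn:var}.

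For the second step, only $g(p):=\sum_{\ell=1}^k p_\ell^{-1}\bigl\|A_{(,\Theta^{(\kappa)}_\ell)}B_{(\Theta^{(\kappa)}_\ell,)}\bigr\|_F^2$ depends on $p$. By the Cauchy--Schwarz inequality,
\begin{align*}
\Bigl(\sum_{\ell=1}^k\bigl\|A_{(,\Theta^{(\kappa)}_\ell)}B_{(\Theta^{(\kappa)}_\ell,)}\bigr\|_F\Bigr)^2=\Bigl(\sum_{\ell=1}^k\frac{\bigl\|A_{(,\Theta^{(\kappa)}_\ell)}B_{(\Theta^{(\kappa)}_\ell,)}\bigr\|_F}{\sqrt{p_\ell}}\sqrt{p_\ell}\Bigr)^2\le g(p)\sum_{\ell=1}^k p_\ell=g(p),
\end{align*}
with equality precisely when $\bigl\|A_{(,\Theta^{(\kappa)}_\ell)}B_{(\Theta^{(\kappa)}_\ell,)}\bigr\|_F/\sqrt{p_\ell}$ is proportional to $\sqrt{p_\ell}$ across $\ell$, i.e.\ when $p_\ell\propto\bigl\|A_{(,\Theta^{(\kappa)}_\ell)}B_{(\Theta^{(\kappa)}_\ell,)}\bigr\|_F$; the constraint $\sum_\ell p_\ell=1$ then fixes the constant and produces exactly $p^{(\kappa)}$ of \eqref{optimalmatrix}. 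Substituting $g(p^{(\kappa)})=\bigl(\sum_\ell\|A_{(,\Theta^{(\kappa)}_\ell)}B_{(\Theta^{(\kappa)}_\ell,)}\|_F\bigr)^2$ into \eqref{eqn:var} gives \eqref{eqn:optimalvar}. Alternatively one may reach the same optimiser by Lagrange multipliers: stationarity of $g(p)+\mu\sum_\ell p_\ell$ gives $p_\ell^{-2}\|\cdot\|_F^2=\mu$, hence $p_\ell\propto\|\cdot\|_F$, and convexity of each $p_\ell\mapsto p_\ell^{-1}$ together with $g(p)\to\infty$ as any $p_\ell\to0$ confirms this interior point is the minimiser.

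I do not anticipate a genuine obstacle; the argument is essentially the \cite{DKM06} proof with single indices replaced by blocks. The two points needing a little care are (i) the regrouping identity $\sum_{\ell=1}^k A_{(,\Theta^{(\kappa)}_\ell)}B_{(\Theta^{(\kappa)}_\ell,)}=AB$, which should be spelled out because the blocks $\Theta^{(\kappa)}_\ell$ have varying sizes, and (ii) the bookkeeping in passing from $\mathrm{Var}_p[\hat S_{(s,t)}]$ to $\frac1c\mathrm{Var}_p[(X_1)_{(s,t)}]$, which uses that the $X_i$ are i.i.d.\ (sampling with replacement). The optimisation step is the standard Cauchy--Schwarz/Lagrange computation and carries over essentially verbatim from the finest-partition case.
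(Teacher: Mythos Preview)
Your proposal is correct and follows essentially the same approach as the paper: an entrywise variance computation using independence of the $c$ draws to obtain \eqref{eqn:var}, followed by optimisation over the probability simplex. The only minor difference is that the paper carries out the minimisation via Lagrange multipliers alone, whereas you lead with the Cauchy--Schwarz argument (which has the advantage of giving global optimality and the optimal value in one stroke) and mention Lagrange as an alternative; both routes are standard and reach the same conclusion.
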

Note that when $c\to \infty$, $\mathbb{E}_{p}[\|AB-\hat S\|^2_F]\to 0$ independent of the choice of $p$. 
\begin{remark}
In expression \eqref{optimalmatrix}, a main feature for each coarser partition emerges in the numerator, which we can name as the {\it $\ell$th-element weight of partition $\Theta^{(\kappa)}$}. 
For each $\ell\in\{1,2,\ldots,k\}$, $A_{(,\Theta^{(\kappa)}_\ell)}B_{(\Theta^{(\kappa)}_\ell,)}$, the corresponding element of $AB$, is approximated only through successful draws of $\Theta^{(\kappa)}_\ell$ among partition $\Theta^{(\kappa)}$. Define 
\begin{align}\label{hatSmatrixsingle}
\hat S_{\Theta^{(\kappa)}_\ell}&= \frac 1 c \sum_{i=1}^c \frac{\mathbb{I}_{\ell}(r_i)}{p_{r_i}} A_{(,\Theta^{(\kappa)}_{r_i})}B_{(\Theta^{(\kappa)}_{r_i},)},
\end{align}
where $\mathbb{I}_{\ell}(r_i)$ is the indicator function such that it returns $1$ only if $r_i=\ell$ and returns $0$ otherwise, then the sampling formula \eqref{hatSmatrixsingle} corresponds to $c$ independent binomial trails with success probability $p_\ell$ and failure probability $1-p_\ell$. It can be easily concluded that $\mathbb{E}[\hat S_{\Theta^{(\kappa)}_\ell}]=A_{(,\Theta^{(\kappa)}_\ell)}B_{(\Theta^{(\kappa)}_\ell,)}$ and $\sqrt{\mathbb{E}_p[\|\hat S_{\Theta^{(\kappa)}_\ell}\|_F^2]}=\|A_{(,\Theta^{(\kappa)}_\ell)}B_{(\Theta^{(\kappa)}_\ell,)}\|_F$.
As the current measure is the expected value of the squared approximation error in Frobenius norm, it makes sense that the optimal sampling probability of drawing the $\ell$th index is propotional to $\|A_{(,\Theta^{(\kappa)}_\ell)}B_{(\Theta^{(\kappa)}_\ell,)}\|_F$, the weight of this particular element.  
\end{remark}
Propsition \ref{prop:minsigma} coincides with Lemma 4 in \cite{DKM06} when considering $\Theta$, which gives the exact expected value of approximation error:
\begin{prop}\label{prop:minsigma1}[Lemma 4 in \cite{DKM06}] Given $\Theta$ and its sampling probability $p=(p_\ell)_{\ell=1}^n$, then the expected value of the squared approximation error in Frobenius norm for approximating $\hat S$ in \eqref{hatS} is 
\begin{align}\label{eqn:var1}
\mathbb{E}_{p}[\|AB-\hat S\|^2_F]= \frac{1}{c}\sum_{\ell=1}^n\frac{1}{p_{\ell}}\|A_{(,\ell)}\|_F^2\|B_{(\ell,)}\|_F^2-\frac{\|AB\|_F^2}{c}.
\end{align}
The optimal sampling probability $p$ in the sense of minimizing $\mathbb{E}_{p}[\|AB-\hat S\|^2_F]$ is given by 
\begin{align}\label{optimalmatrix1}
p^{(o)}_\ell := \frac{\|A_{(,\ell)}\|_F\|B_{(\ell,)}\|_F}{\sum_{\ell=1}^n\|A_{(,\ell)}\|_F\|B_{(\ell,)}\|_F}, \ \ \text{for all}\ \ 1\leq \ell\leq n,
\end{align}
with corresponding expected squared Frobenius-norm error bounded by
\begin{align}\label{eqn:optimalvar1} 
V_{p^{(o)}}(AB):=\mathbb{E}_{p^{(o)}}[\|AB-\hat S\|^2_F]= \frac{1}{c}\big(\sum_{\ell=1}^n\|A_{(,\ell)}\|_F\|B_{(\ell,)}\|_F\big)^2-\frac{\|AB\|_F^2}{c}.
\end{align}
\end{prop}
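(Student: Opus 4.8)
The plan is to obtain Proposition~\ref{prop:minsigma1} as the special case of Proposition~\ref{prop:minsigma} in which the coarser partition is taken to be the finest partition $\Theta$ itself; that is, $k=n$ and $\Theta^{(\kappa)}_\ell=\{\ell\}$ for every $1\le\ell\le n$. Under this identification the block $A_{(,\Theta^{(\kappa)}_\ell)}B_{(\Theta^{(\kappa)}_\ell,)}$ collapses to the rank-one outer product $A_{(,\ell)}B_{(\ell,)}$ of the $\ell$th column of $A$ with the $\ell$th row of $B$, and the whole statement follows once the Frobenius norm of such an outer product is rewritten in the form appearing in \eqref{eqn:var1}--\eqref{eqn:optimalvar1}. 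So the first thing I would record is the elementary identity that for a column vector $u\in\mathbb{R}^m$ and a row vector $w\in\mathbb{R}^{1\times\rho}$ one has $\|uw\|_F^2=\sum_{i,j}u_i^2w_j^2=\big(\sum_i u_i^2\big)\big(\sum_j w_j^2\big)=\|u\|_2^2\,\|w\|_2^2$; applied with $u=A_{(,\ell)}$ and $w=B_{(\ell,)}$ this gives $\big\|A_{(,\ell)}B_{(\ell,)}\big\|_F=\|A_{(,\ell)}\|_F\|B_{(\ell,)}\|_F$.

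With this identity in hand the proof is pure substitution: plugging it into \eqref{eqn:var} yields \eqref{eqn:var1}, into \eqref{optimalmatrix} yields \eqref{optimalmatrix1}, and into \eqref{eqn:optimalvar} yields \eqref{eqn:optimalvar1}, while the assertion that $p^{(o)}$ minimises $\mathbb{E}_p[\|AB-\hat S\|_F^2]$ is exactly the minimisation claim of Proposition~\ref{prop:minsigma} transported through the same substitution. I would also spell out, for readers who prefer not to route through the general result, the self-contained derivation: expand $\mathbb{E}_p[\|AB-\hat S\|_F^2]$ entrywise and use $\mathbb{E}_p[\hat S]=AB$ from Proposition~\ref{prop:unbias} to write it as $\sum_{i,j}\mathrm{Var}_p(\hat S_{(i,j)})$; since $\hat S=\frac1c\sum_{t=1}^c X_t$ with the $X_t$ i.i.d.\ copies of $\frac1{p_R}A_{(,R)}B_{(R,)}$ for $R\sim p$, this equals $\frac1c\big(\mathbb{E}_p[\|X_1\|_F^2]-\|AB\|_F^2\big)$, and $\mathbb{E}_p[\|X_1\|_F^2]=\sum_{\ell}p_\ell\frac{1}{p_\ell^2}\|A_{(,\ell)}B_{(\ell,)}\|_F^2=\sum_\ell\frac1{p_\ell}\|A_{(,\ell)}\|_F^2\|B_{(\ell,)}\|_F^2$, which is \eqref{eqn:var1}.

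Finally the optimal $p$ comes from minimising $\sum_\ell \frac1{p_\ell}\|A_{(,\ell)}\|_F^2\|B_{(\ell,)}\|_F^2$ over the probability simplex, which I would do by Cauchy--Schwarz: $\big(\sum_\ell \|A_{(,\ell)}\|_F\|B_{(\ell,)}\|_F\big)^2=\big(\sum_\ell \tfrac{\|A_{(,\ell)}\|_F\|B_{(\ell,)}\|_F}{\sqrt{p_\ell}}\cdot\sqrt{p_\ell}\big)^2\le \big(\sum_\ell \tfrac1{p_\ell}\|A_{(,\ell)}\|_F^2\|B_{(\ell,)}\|_F^2\big)\big(\sum_\ell p_\ell\big)$, with equality precisely when $p_\ell\propto\|A_{(,\ell)}\|_F\|B_{(\ell,)}\|_F$, i.e.\ at \eqref{optimalmatrix1} (a Lagrange-multiplier computation gives the same conclusion); substituting $p^{(o)}$ back into \eqref{eqn:var1} produces \eqref{eqn:optimalvar1}. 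There is no genuine obstacle in this argument — the only points requiring a little care are the bookkeeping that certifies the finest partition as a legitimate instance of the general framework so that Proposition~\ref{prop:minsigma} applies, and the trivial-but-essential rank-one norm identity; everything else is substitution together with the one-line Cauchy--Schwarz step, and the statement is in any case recorded in \cite{DKM06}.
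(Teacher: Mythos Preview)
Your proposal is correct and matches the paper's treatment: the paper does not give a separate proof of this proposition but simply presents it as the specialisation of Proposition~\ref{prop:minsigma} to the finest partition $\Theta$ (and attributes it to \cite{DKM06}), which is exactly your primary route once the rank-one identity $\|A_{(,\ell)}B_{(\ell,)}\|_F=\|A_{(,\ell)}\|_F\|B_{(\ell,)}\|_F$ is noted. The only cosmetic difference is that, in the proof of the general Proposition~\ref{prop:minsigma}, the paper obtains the optimal $p$ via a Lagrange multiplier rather than your Cauchy--Schwarz argument, a point you already acknowledge.
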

Coarser partition tends to enrich $\hat{S}$ in a quick manner. We can easily reach this by compared \eqref{eqn:optimalvar1} with \eqref{eqn:optimalvar}.
 \begin{corollary}\label{cor:largestapproerror}
To approximate $AB$ with $c$ independent trials, compared to any coarser partition $\Theta^{(\kappa)}$ with their corresponding optimal probability given in \eqref{optimalmatrix}, the finest partition $\Theta$ with its optimal sampling probability gives the largest expected value of the squared approximation error in Frobenius norm. That is, $V_{p^{(\kappa)}}(AB)\leq V_{p^{(o)}}(AB).$
 \end{corollary}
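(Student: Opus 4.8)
The plan is to reduce the claimed inequality $V_{p^{(\kappa)}}(AB)\leq V_{p^{(o)}}(AB)$ to a comparison of the two leading sums, and then to bound block-by-block using the triangle inequality for the Frobenius norm. Subtracting the common term $\|AB\|_F^2/c$ from both sides of \eqref{eqn:optimalvar} and \eqref{eqn:optimalvar1}, it suffices to show
\begin{equation*}
\Big(\sum_{\ell=1}^k\big\|A_{(,\Theta^{(\kappa)}_\ell)}B_{(\Theta^{(\kappa)}_\ell,)}\big\|_F\Big)^2\ \leq\ \Big(\sum_{\ell=1}^n\|A_{(,\ell)}\|_F\|B_{(\ell,)}\|_F\Big)^2,
\end{equation*}
and since both bracketed quantities are nonnegative, this is equivalent to the same inequality with the squares removed.

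First I would expand each block product as a sum of rank-one outer products over the indices it contains, namely $A_{(,\Theta^{(\kappa)}_\ell)}B_{(\Theta^{(\kappa)}_\ell,)}=\sum_{i\in\Theta^{(\kappa)}_\ell}A_{(,i)}B_{(i,)}$, which is the usual column-times-row decomposition of a matrix product restricted to the columns/rows indexed by $\Theta^{(\kappa)}_\ell$. Applying the triangle inequality for $\|\cdot\|_F$ gives
\begin{equation*}
\big\|A_{(,\Theta^{(\kappa)}_\ell)}B_{(\Theta^{(\kappa)}_\ell,)}\big\|_F\ \leq\ \sum_{i\in\Theta^{(\kappa)}_\ell}\big\|A_{(,i)}B_{(i,)}\big\|_F .
\end{equation*}
Next I would record the elementary identity $\|A_{(,i)}B_{(i,)}\|_F=\|A_{(,i)}\|_F\|B_{(i,)}\|_F$, valid because $A_{(,i)}B_{(i,)}$ is the outer product of a column vector with a row vector, so $\|A_{(,i)}B_{(i,)}\|_F^2=\sum_{j,t}A_{(j,i)}^2B_{(i,t)}^2=\|A_{(,i)}\|_F^2\|B_{(i,)}\|_F^2$.

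Summing the previous display over $\ell=1,\dots,k$ and using that $\{\Theta^{(\kappa)}_1,\dots,\Theta^{(\kappa)}_k\}$ is a partition of $\{1,\dots,n\}$ (so the double sum over $\ell$ and $i\in\Theta^{(\kappa)}_\ell$ is exactly the single sum over $i=1,\dots,n$) yields
\begin{equation*}
\sum_{\ell=1}^k\big\|A_{(,\Theta^{(\kappa)}_\ell)}B_{(\Theta^{(\kappa)}_\ell,)}\big\|_F\ \leq\ \sum_{i=1}^n\|A_{(,i)}\|_F\|B_{(i,)}\|_F,
\end{equation*}
which is exactly the desired estimate; squaring and dividing by $c$ finishes the proof. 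There is no real obstacle here: the only point that needs a moment's care is that the index sets $\Theta^{(\kappa)}_\ell$ genuinely form a partition (every original index appears in exactly one block), which is guaranteed by the construction of $\Theta^{(\kappa)}$ from $\Theta$ in Section \ref{sec:RamSam}, so no double counting or omission occurs when the block sums are recombined.
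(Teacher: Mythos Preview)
Your proof is correct and follows essentially the same approach as the paper: both reduce the inequality to showing $\sum_{\ell=1}^k\|A_{(,\Theta^{(\kappa)}_\ell)}B_{(\Theta^{(\kappa)}_\ell,)}\|_F\leq\sum_{\ell=1}^n\|A_{(,\ell)}B_{(\ell,)}\|_F$ via the triangle inequality for $\|\cdot\|_F$. You are in fact a bit more explicit than the paper in spelling out the rank-one identity $\|A_{(,i)}B_{(i,)}\|_F=\|A_{(,i)}\|_F\|B_{(i,)}\|_F$, which the paper uses tacitly when passing between \eqref{eqn:optimalvar1} and the form $\sum_\ell\|A_{(,\ell)}B_{(\ell,)}\|_F$.
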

Another way of interpreting this is, the sampling procedure based on the finest partition can be decomposed into two parts, first approximating the coarser partition $\Theta^{(\kappa)}$, and second approximating $AB$ based on the approximation of $\Theta^{(\kappa)}$. Comparing with directly approximating $AB$ via $\Theta^{(\kappa)}$, the sampling procedure via the finest partition is less efficient. Therefore it is not surprising that the approximation via the finest partition is with the largest error. 

Another useful measure is the probabilistic bound for the approximation error in 2-norm, i.e., $\|AB-\hat S\|_2$. It quantifies how good the approximation $\hat S$ is in spectrum norm sense in one go. By definition, spectrum norm measures the largest singular value along while Frobenius norm measures the sum of singular values. An application of noncommutative Bernstein inequality leads to the following upper bound of  $\|AB-\hat S\|_2$:
 \begin{prop}\label{prop:2norm}
Given $\Theta^{(\kappa)}$ with $|\Theta^{(\kappa)}|=k$ and its sampling probability $p=(p_\ell)_{\ell=1}^k$. Assume random sampling procedure $\hat S$ in \eqref{hatS} for approximating $AB$. Then
for each fixed $c$, we have for any $\epsilon>0$ that
\begin{align}\label{eqn:bound2norm}
\mathbb{P}_p\big(\|\hat S-AB\|_2>\epsilon\big)\leq  (m+\rho)\exp\Big(-\frac{c\epsilon^2}{ 2(\|AB\|^2_2+2M\|AB\|_2+ \mathcal{U}_2(p))^2+ \epsilon ( \|AB\|_2 +\mathcal{U}_1(p))}\Big),
\end{align}
where $M:=\sum_{\ell=1}^k\big\|A_{(,\Theta^{(\kappa)}_\ell)}B_{(\Theta^{(\kappa)}_\ell,)}\big\|_F$, and
 $$\mathcal{U}_1(p):=\max_{r}\frac{1}{p_r}\big\|A_{(,\Theta^{(\kappa)}_r)}B_{(\Theta^{(\kappa)}_r,)}\big\|_F\ \ \text{and} \ \ \mathcal{U}_2(p):=\sum_{\ell=1}^k \frac{1}{p_\ell}\big\|A_{(,\Theta^{(\kappa)}_\ell)}B_{(\Theta^{(\kappa)}_\ell,)}\big\|_F^2.$$
\end{prop}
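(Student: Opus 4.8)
The plan is to express the error $\hat S - AB$ as a sum of $c$ i.i.d. zero-mean random matrices and then apply the noncommutative Bernstein inequality (Theorem 4.5 in \cite{Mahoney16}). Concretely, for each trial $i$ define $X_i := \frac{1}{c}\big(\frac{1}{p_{r_i}}A_{(,\Theta^{(\kappa)}_{r_i})}B_{(\Theta^{(\kappa)}_{r_i},)} - AB\big)$, so that $\hat S - AB = \sum_{i=1}^c X_i$ and, by Proposition \ref{prop:unbias}, $\mathbb{E}_p[X_i] = 0$. Since the $X_i$ are square (or can be symmetrized/dilated to square, or one works with the asymmetric Bernstein version that introduces the $m+\rho$ dimensional factor), the inequality requires two ingredients: a uniform almost-sure bound $\|X_i\|_2 \le \gamma$, and a bound on the variance parameter $\big\|\sum_i \mathbb{E}[X_i X_i^T]\big\|_2$ (and its transpose counterpart), after which $\mathbb{P}_p(\|\sum_i X_i\|_2 > \epsilon)$ is at most $(m+\rho)\exp\!\big(-\epsilon^2/(2\sigma^2 + 2\gamma\epsilon/3)\big)$ or a variant thereof.

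First I would bound $\|X_i\|_2$. Using the triangle inequality, $\|X_i\|_2 \le \frac{1}{c}\big(\frac{1}{p_{r_i}}\|A_{(,\Theta^{(\kappa)}_{r_i})}B_{(\Theta^{(\kappa)}_{r_i},)}\|_2 + \|AB\|_2\big)$; then bound the spectral norm by the Frobenius norm and take the worst case over $r_i$, which produces $\gamma = \frac{1}{c}(\mathcal{U}_1(p) + \|AB\|_2)$. This matches the $\epsilon(\|AB\|_2 + \mathcal{U}_1(p))$ term in the exponent's denominator (up to the constant absorbed into the $c$ in the numerator). Next I would compute the variance term: $\mathbb{E}[X_i X_i^T] = \frac{1}{c^2}\big(\mathbb{E}\big[\frac{1}{p_{r_i}^2}(A_{(,\Theta^{(\kappa)}_{r_i})}B_{(\Theta^{(\kappa)}_{r_i},)})(A_{(,\Theta^{(\kappa)}_{r_i})}B_{(\Theta^{(\kappa)}_{r_i},)})^T\big] - (AB)(AB)^T\big)$, where the cross terms vanish by unbiasedness. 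Expanding the expectation as $\sum_{\ell=1}^k \frac{1}{p_\ell}(A_{(,\Theta^{(\kappa)}_\ell)}B_{(\Theta^{(\kappa)}_\ell,)})(A_{(,\Theta^{(\kappa)}_\ell)}B_{(\Theta^{(\kappa)}_\ell,)})^T$, I would take spectral norms, use $\|MM^T\|_2 = \|M\|_2^2 \le \|M\|_F^2$ on each summand to get the $\mathcal{U}_2(p)$ contribution, use $\|(AB)(AB)^T\|_2 = \|AB\|_2^2$, and handle any remaining cross structure with $\|AB\|_2 \le M$ (since $\|AB\|_F \le \sum_\ell \|A_{(,\Theta^{(\kappa)}_\ell)}B_{(\Theta^{(\kappa)}_\ell,)}\|_F = M$), which accounts for the $2M\|AB\|_2$ term. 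Summing over $i$ gives $\sigma^2 = \frac{1}{c}(\|AB\|_2^2 + 2M\|AB\|_2 + \mathcal{U}_2(p))$, and the same bound holds for the transposed second moment by symmetry of the roles of rows and columns.

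The main obstacle I anticipate is twofold. First, getting the variance bound into exactly the stated form $(\|AB\|_2^2 + 2M\|AB\|_2 + \mathcal{U}_2(p))^2$ — note the outer square — requires care: the natural Bernstein variance is linear in these quantities, so either the statement is bounding a looser quantity (replacing $\sigma^2$ by $\sigma^4$, which is legitimate if $\sigma^2 \ge 1$ after the relevant normalization, or simply a deliberately crude over-estimate $2\sigma^2 \le 2(c\sigma^2)^2/c$ when $c\sigma^2 \ge 1$), or there is a specific algebraic grouping $\|\mathbb{E}[(\cdot)(\cdot)^T]\|_2 \le (\|AB\|_2 + M)^2 + (\text{slack})$ arising from completing a square on $\|A_{(,\Theta^{(\kappa)}_\ell)}B_{(\Theta^{(\kappa)}_\ell,)}\|_F$ terms; I would reconcile this by tracking the exact constant in Theorem 4.5 of \cite{Mahoney16} and, if needed, invoking $x \le x^2$ for $x\ge 1$ to match the displayed exponent. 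Second, applying the \emph{rectangular} (non-Hermitian) version of the noncommutative Bernstein inequality correctly — this is what yields the $(m+\rho)$ prefactor rather than $m$ or $\rho$ alone — so I would either cite the rectangular form directly or pass to the Hermitian dilation $\begin{pmatrix} 0 & X_i \\ X_i^T & 0 \end{pmatrix}$, whose operator norm equals $\|X_i\|_2$ and whose dimension is $m+\rho$, and then the bounds above feed straight into the Hermitian statement.
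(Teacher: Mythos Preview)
Your approach is essentially the paper's: define the centered summands, bound them almost surely by $\|AB\|_2+\mathcal{U}_1(p)$ via the triangle inequality and $\|\cdot\|_2\le\|\cdot\|_F$, bound the matrix variance, and invoke the rectangular noncommutative Bernstein inequality (the paper simply cites Theorem~4.5 of \cite{Mahoney16}; your dilation remark is a correct way to recover the $m+\rho$ factor if only the Hermitian version is at hand).

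The one place you diverge is the variance step, and it causes you unnecessary trouble. You expand $\mathbb{E}_p[(Y-AB)(Y-AB)^T]=\mathbb{E}_p[YY^T]-(AB)(AB)^T$ and then try to bound pieces separately; since the cross terms have already cancelled, there is no ``remaining cross structure'' to generate the $2M\|AB\|_2$ term, and your attempt to reintroduce it via $\|AB\|_2\le M$ is artificial. The paper instead does not expand: it writes
\[
\big\|\mathbb{E}_p[(Y-AB)^T(Y-AB)]\big\|_2
\;\le\;\sum_{\ell=1}^k p_\ell\,\Big\|\tfrac{1}{p_\ell}A_{(,\Theta^{(\kappa)}_\ell)}B_{(\Theta^{(\kappa)}_\ell,)}-AB\Big\|_2^2
\;\le\;\sum_{\ell=1}^k p_\ell\Big(\|AB\|_2+\tfrac{1}{p_\ell}\big\|A_{(,\Theta^{(\kappa)}_\ell)}B_{(\Theta^{(\kappa)}_\ell,)}\big\|_F\Big)^2,
\]
and multiplying out the square gives exactly $\|AB\|_2^2+2M\|AB\|_2+\mathcal{U}_2(p)$. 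Your route is not wrong---in fact it yields the tighter bound $\mathcal{U}_2(p)$, which certainly implies the stated one---but it does not reproduce the displayed expression on the nose. As for the outer square on the variance term in \eqref{eqn:bound2norm}: your suspicion is well-founded; the standard Bernstein exponent is linear in the variance parameter, and the paper simply carries the squared expression through without comment, so treat it as a (possibly unintended) slackening rather than something you need to derive.
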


An important observation can be drawn from the right hand side of \eqref{eqn:bound2norm}: with fixed $\Theta^{(\kappa)}$,  among all choices of sampling probabilities, the smallest upper bound would be obtained when both $\mathcal{U}_{1}(p)$ and   $\mathcal{U}_{2}(p)$ are minimized. It can be shown via the method of Lagrange multiplier that, $\mathcal{U}_{1}(p)$ and $\mathcal{U}_{2}(p)$ achieves their minimum when $p=p^{(k)}$ given in \eqref{optimalmatrix}, and the corresponding upper bound can be obtained as an consequence of Porposition \ref{prop:2norm}.
\begin{corollary}\label{cor:2normoptimal}
Given $\Theta^{(\kappa)}$ with $|\Theta^{(\kappa)}|=k$ and its optimal sampling probability $p^{(\kappa)}=(p^{(\kappa)}_\ell)_{\ell=1}^k$. Assume random sampling procedure $\hat S$ in \eqref{hatS} for approximating $AB$. Then
for each fixed $c$, we have for any $\epsilon>0$
\begin{align}\label{eqn:bound2norm'}
\mathbb{P}_p\big(\|\hat S-AB\|_2>\epsilon\big)\leq  (m+\rho)\exp\Big(-\frac{c\epsilon^2}{ 2(\|AB\|_2+M)^2+ \epsilon ( \|AB\|_2 +M)}\Big).
\end{align}
\end{corollary}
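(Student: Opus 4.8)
\textbf{Proof proposal for Corollary \ref{cor:2normoptimal}.} The plan is to specialise the general estimate \eqref{eqn:bound2norm} of Proposition \ref{prop:2norm} to the choice $p = p^{(\kappa)}$. First I would record the elementary monotonicity fact that, with $c$, $\epsilon$, $m+\rho$ and $\|AB\|_2$ held fixed, the right-hand side of \eqref{eqn:bound2norm} is an increasing function of each of $\mathcal{U}_1(p)$ and $\mathcal{U}_2(p)$: both quantities occur only in the denominator of the exponent with positive sign, and $x \mapsto (m+\rho)\exp(-c\epsilon^2/x)$ is increasing for $x>0$. Consequently, the sharpest bound that \eqref{eqn:bound2norm} can deliver over admissible $p$ is the one for which $\mathcal{U}_1(p)$ and $\mathcal{U}_2(p)$ are \emph{simultaneously} minimal, so it suffices to show that $p^{(\kappa)}$ attains both minima and to evaluate them.

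Write $w_\ell := \|A_{(,\Theta^{(\kappa)}_\ell)}B_{(\Theta^{(\kappa)}_\ell,)}\|_F$, so that $M=\sum_{\ell=1}^k w_\ell$ and $p^{(\kappa)}_\ell = w_\ell/M$ by \eqref{optimalmatrix}. For $\mathcal{U}_1(p)=\max_r w_r/p_r$: the choice $p^{(\kappa)}_r = w_r/M$ makes every ratio $w_r/p^{(\kappa)}_r$ equal to $M$, so $\mathcal{U}_1(p^{(\kappa)})=M$; and no distribution does better, for if $\max_r w_r/p_r < M$ then $p_r > w_r/M$ for every $r$ and summing contradicts $\sum_r p_r = 1$. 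For $\mathcal{U}_2(p)=\sum_\ell w_\ell^2/p_\ell$: Cauchy--Schwarz (equivalently the Lagrange-multiplier computation mentioned before the corollary) gives $\big(\sum_\ell w_\ell\big)^2 = \big(\sum_\ell (w_\ell/\sqrt{p_\ell})\sqrt{p_\ell}\big)^2 \le \big(\sum_\ell w_\ell^2/p_\ell\big)\big(\sum_\ell p_\ell\big) = \mathcal{U}_2(p)$, so $\mathcal{U}_2(p)\ge M^2$ with equality precisely when $p_\ell \propto w_\ell$, i.e. at $p = p^{(\kappa)}$; hence $\mathcal{U}_2(p^{(\kappa)})=M^2$.

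Finally I would substitute $\mathcal{U}_1(p^{(\kappa)})=M$ and $\mathcal{U}_2(p^{(\kappa)})=M^2$ into \eqref{eqn:bound2norm}, using $\|AB\|_2^2+2M\|AB\|_2+M^2=(\|AB\|_2+M)^2$ and $\|AB\|_2+\mathcal{U}_1(p^{(\kappa)})=\|AB\|_2+M$ to collapse the denominator of the exponent into $2(\|AB\|_2+M)^2+\epsilon(\|AB\|_2+M)$, which is exactly \eqref{eqn:bound2norm'}. The only genuinely non-routine point is the middle step: $\mathcal{U}_1$ (a min--max objective) and $\mathcal{U}_2$ (a weighted sum of reciprocals) are a priori different optimisation problems, and the crux is the observation that the single mass-balancing distribution $p_\ell \propto w_\ell$ — which equalises all the ratios $w_\ell/p_\ell$ — is at once the minimax solution for $\mathcal{U}_1$ and, through the Cauchy--Schwarz equality case, the minimiser of $\mathcal{U}_2$. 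Everything else is bookkeeping on top of Proposition \ref{prop:2norm}.
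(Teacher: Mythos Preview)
Your proposal is correct and follows the same approach the paper outlines: the paper does not give a separate proof in the Appendix but simply remarks before the corollary that the right-hand side of \eqref{eqn:bound2norm} is smallest when $\mathcal{U}_1(p)$ and $\mathcal{U}_2(p)$ are both minimised, asserts that Lagrange multipliers show this occurs at $p=p^{(\kappa)}$, and treats the corollary as the resulting substitution. You fill in precisely these details---using a direct argument for $\mathcal{U}_1$ and Cauchy--Schwarz for $\mathcal{U}_2$ in place of Lagrange multipliers---and then carry out the same substitution, so the route is essentially identical (and your minimisation arguments, while not strictly required for the corollary as stated, justify the paper's claim that \eqref{eqn:bound2norm'} is the best bound obtainable from \eqref{eqn:bound2norm}).
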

Corollary \ref{cor:2normoptimal} suggests that, the optimal sampling probability not only minimises the expected value of the squared approximation error in Frobenius norm but also offers a nicer approximation in spectrum norm sense in one go with large probability.

Another routine result is the upper bound for the spectrum of $\hat S$ from optimal sampling probability, which can be estimated by the sum of element weights of this partition.
\begin{prop}\label{prop:max1}
Given $\Theta^{(\kappa)}$ with $|\Theta^{(\kappa)}|=k$ and its optimal sampling probability $p^{(\kappa)}$. Assume random sampling procedure $\hat S$ in \eqref{hatS} for approximating $AB$. Then we can conclude that
\begin{align}\label{eqn:propmaxeqn1}
\|\hat S\|_2\leq \|\hat S\|_F\leq \sum_{\ell=1}^k\big\|A_{(,\Theta^{(\kappa)}_\ell)}B_{(\Theta^{(\kappa)}_\ell,)}\big\|_F.
\end{align}
\end{prop}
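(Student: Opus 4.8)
The plan is to prove the two inequalities in \eqref{eqn:propmaxeqn1} separately; the first is immediate and the second is the substance. For the first inequality, $\|\hat S\|_2 \leq \|\hat S\|_F$ holds for every matrix, since the spectrum norm is the largest singular value while the Frobenius norm is the square root of the sum of all squared singular values. So the real work is bounding $\|\hat S\|_F$ from above by $M := \sum_{\ell=1}^k \big\|A_{(,\Theta^{(\kappa)}_\ell)} B_{(\Theta^{(\kappa)}_\ell,)}\big\|_F$, the sum of element weights of the partition.

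First I would write $\hat S$ in its sampled form \eqref{hatS}, namely $\hat S = \frac{1}{c}\sum_{i=1}^c \frac{1}{p_{r_i}} A_{(,\Theta^{(\kappa)}_{r_i})} B_{(\Theta^{(\kappa)}_{r_i},)}$, and apply the triangle inequality for the Frobenius norm to get $\|\hat S\|_F \leq \frac{1}{c}\sum_{i=1}^c \frac{1}{p_{r_i}} \big\|A_{(,\Theta^{(\kappa)}_{r_i})} B_{(\Theta^{(\kappa)}_{r_i},)}\big\|_F$. Now I substitute the optimal probability $p^{(\kappa)}_{r_i} = \big\|A_{(,\Theta^{(\kappa)}_{r_i})} B_{(\Theta^{(\kappa)}_{r_i},)}\big\|_F / M$ from \eqref{optimalmatrix}. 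This makes each term $\frac{1}{p^{(\kappa)}_{r_i}} \big\|A_{(,\Theta^{(\kappa)}_{r_i})} B_{(\Theta^{(\kappa)}_{r_i},)}\big\|_F$ collapse to exactly $M$, independent of which index $r_i$ was drawn. Hence the sum over $i=1,\dots,c$ is $c M$, and dividing by $c$ yields $\|\hat S\|_F \leq M$, which is precisely the claimed bound. Chaining this with $\|\hat S\|_2 \leq \|\hat S\|_F$ completes the proof.

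There is essentially no obstacle here: the result is a routine consequence of the triangle inequality combined with the precise cancellation that the optimal probability \eqref{optimalmatrix} was designed to produce — each rescaled outer-product block has Frobenius norm exactly $M$. The only point worth a sentence of care is the degenerate case in which some element weight $\big\|A_{(,\Theta^{(\kappa)}_\ell)} B_{(\Theta^{(\kappa)}_\ell,)}\big\|_F$ vanishes, so that $p^{(\kappa)}_\ell = 0$; in that situation $\Theta^{(\kappa)}_\ell$ is never drawn and contributes nothing to $\hat S$, and one simply restricts the argument to the indices with positive weight (equivalently, adopts the convention $0/0 = 0$ for such a term), which does not affect the bound. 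I would also remark that the same argument shows $\|\hat S\|_F \leq M$ holds pathwise, i.e., for every realization of the sample $\mathbf r$, not merely in expectation — a strictly stronger statement than, and consistent with, the expectation identity $\sqrt{\mathbb{E}_{p^{(\kappa)}}[\|\hat S_{\Theta^{(\kappa)}_\ell}\|_F^2]} = \|A_{(,\Theta^{(\kappa)}_\ell)} B_{(\Theta^{(\kappa)}_\ell,)}\|_F$ noted in the remark after Proposition \ref{prop:minsigma}.
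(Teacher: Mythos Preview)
Your proof is correct and follows essentially the same route as the paper: apply the triangle inequality to the sum defining $\hat S$, then substitute the optimal probability $p^{(\kappa)}_{r_i}$ from \eqref{optimalmatrix} so that each summand collapses to $M$, giving $\|\hat S\|_F\le M$. Your additional remarks on the degenerate zero-weight case and on the bound holding pathwise are correct and harmless elaborations.
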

Proposition \ref{prop:max1} does not relate $\|\hat S\|_2$ directly to $\|A\|_2\|B\|_2$ or $\|AB\|_2$. As a special case, this can be achieved if $p$ is uniform:
\begin{prop}\label{prop:max}
Given $\Theta^{(\kappa)}$ with $|\Theta^{(\kappa)}|=k$ and its sampling probability $p=(p_\ell)_{\ell=1}^k$, $p_\ell=\frac{1}{k}$. Assume random sampling procedure $\hat S$ in \eqref{hatS} for approximating $AB$. Further assume $100\leq k^{c-1}$. Then
for each fixed $c$, choose $s_c\in \mathbb{N}$ such that 
\begin{align}\label{eqn:boundnc}
s^{(k)}_c=\min_{2\leq s\leq c}\big\{s: s\geq 100c\big(1-F_{\text{b}}\big(s-2;c-1,\frac{1}{k}\big)\big)\big\},
\end{align}
 where $F_{\text{b}}\big(s;N,\xi)$ is the cumulative binomial distribution function with $N$ the total number of trials, $\xi$ the probability of success and $s$ the number of success interested; then with probability at least $0.99$, 
\begin{align}\label{eqn:propmaxeqn}
\|\hat S\|_2\leq \frac{k(s^{(k)}_c-1)}{c}\|A\|_2\|B\|_2.
\end{align}
\end{prop}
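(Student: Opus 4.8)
\medskip
\noindent\emph{Proof strategy.} The plan is to route everything through the matrix form \eqref{hatSmatrix} of $\hat S$ and to reduce the assertion to a tail bound for the maximum cell count of a uniform multinomial allocation, which is then handled by a union bound together with one elementary binomial identity. So first I would describe $\hat S$ under uniform $p$. Write $N_\ell$ for the number of indices $i\in\{1,\dots,c\}$ with $r_i=\ell$, so that $(N_1,\dots,N_k)$ is multinomial with parameters $c$ and $(\tfrac1k,\dots,\tfrac1k)$ and each $N_\ell$ is marginally $\mathrm{Bin}(c,\tfrac1k)$. Since every $p_{r_i}=\tfrac1k$, the diagonal matrix $C^2$ in \eqref{hatSmatrix} equals $\tfrac{k}{c}I$, so $DC^2D^T$ is the $n\times n$ diagonal matrix whose $j$th diagonal entry is $\tfrac{k}{c}N_{\ell(j)}$, where $\ell(j)$ denotes the block of $\Theta^{(\kappa)}$ containing $j$. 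Because $\Theta^{(\kappa)}$ partitions $\{1,\dots,n\}$ into nonempty blocks, $\max_{1\le j\le n}N_{\ell(j)}=\max_{1\le\ell\le k}N_\ell$, and submultiplicativity of the spectral norm gives
\[
\|\hat S\|_2=\big\|A\,(DC^2D^T)\,B\big\|_2\le\|A\|_2\,\|DC^2D^T\|_2\,\|B\|_2=\frac{k}{c}\Big(\max_{1\le\ell\le k}N_\ell\Big)\|A\|_2\|B\|_2 .
\]
Hence it suffices to prove $\mathbb{P}_p\big(\max_\ell N_\ell\ge s^{(k)}_c\big)\le 0.01$, since on the complementary event $\max_\ell N_\ell\le s^{(k)}_c-1$ the display yields exactly \eqref{eqn:propmaxeqn}.

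For the tail bound I would use the union bound $\mathbb{P}_p(\max_\ell N_\ell\ge s)\le\sum_{\ell=1}^k\mathbb{P}_p(N_\ell\ge s)=k\,\mathbb{P}(Z\ge s)$ with $Z\sim\mathrm{Bin}(c,\tfrac1k)$, and then the identity $\binom{c}{j}=\tfrac{c}{j}\binom{c-1}{j-1}$. Applying it to each term of $\mathbb{P}(Z\ge s)=\sum_{j\ge s}\binom{c}{j}(\tfrac1k)^j(1-\tfrac1k)^{c-j}$, shifting the summation index to $i=j-1$, and bounding $\tfrac1j\le\tfrac1s$ on $j\ge s$ gives
\[
k\,\mathbb{P}(Z\ge s)\le\frac{c}{s}\sum_{i\ge s-1}\binom{c-1}{i}\Big(\tfrac1k\Big)^i\Big(1-\tfrac1k\Big)^{c-1-i}=\frac{c}{s}\Big(1-F_{\text{b}}\big(s-2;c-1,\tfrac1k\big)\Big).
\]
By the defining property \eqref{eqn:boundnc} of $s^{(k)}_c$, namely $s^{(k)}_c\ge 100c\big(1-F_{\text{b}}(s^{(k)}_c-2;c-1,\tfrac1k)\big)$, the right-hand side above evaluated at $s=s^{(k)}_c$ is at most $\tfrac1{100}$, which is precisely the required $\mathbb{P}_p(\max_\ell N_\ell\ge s^{(k)}_c)\le 0.01$.

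Finally I would check that the hypothesis $100\le k^{c-1}$ is exactly what makes \eqref{eqn:boundnc} well posed: the candidate $s=c$ always lies in the set minimised there, because $1-F_{\text{b}}(c-2;c-1,\tfrac1k)=\mathbb{P}\big(\mathrm{Bin}(c-1,\tfrac1k)=c-1\big)=k^{-(c-1)}$, so the inequality $c\ge 100c\,k^{-(c-1)}$ is equivalent to $k^{c-1}\ge100$; thus $s^{(k)}_c$ exists, satisfies $2\le s^{(k)}_c\le c$, and $s^{(k)}_c-1\ge1$ makes \eqref{eqn:propmaxeqn} nonvacuous. Everything else is triangle-inequality and submultiplicativity bookkeeping. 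The one step that needs genuine care, and the only mildly clever point, is the termwise rewriting of the binomial tail: converting the $\mathrm{Bin}(c,\tfrac1k)$ tail at threshold $s$ into the $\mathrm{Bin}(c-1,\tfrac1k)$ tail at threshold $s-1$ with the favourable prefactor $c/s$ — applying the index shift and the bound $1/j\le1/s$ only on the range $j\ge s\ge2$ — which is exactly the shape needed to match the quantity defining $s^{(k)}_c$. I expect this to be the main (minor) obstacle; the rest is routine.
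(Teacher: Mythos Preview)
Your proposal is correct and follows essentially the same route as the paper: write $\hat S = A(DC^2D^T)B$, bound $\|DC^2D^T\|_2$ by $\tfrac{k}{c}\max_\ell N_\ell$, apply a union bound to the multinomial maximum, and convert the $\mathrm{Bin}(c,\tfrac1k)$ tail at threshold $s$ into the $\mathrm{Bin}(c-1,\tfrac1k)$ tail at threshold $s-1$ via $\binom{c}{j}=\tfrac{c}{j}\binom{c-1}{j-1}$ together with $\tfrac1j\le\tfrac1s$. Your write-up is in fact cleaner than the paper's (which states the union-bound step somewhat loosely), and your explicit verification that $100\le k^{c-1}$ makes \eqref{eqn:boundnc} well posed matches the paper's Remark following the proposition.
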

\begin{remark} 
\begin{enumerate}
\item The assumption $100\leq k^{c-1}$ ensures the valid upper bound $c$ for $s^{(k)}_c$ in Eqn. \eqref{eqn:boundnc}. This can be seen by simply replacing $s$ with $c$ in the inequality of \eqref{eqn:boundnc} and then using the fact that $1-F_{\text{b}}\big(c-2;c-1,\frac{1}{k}\big)=F_{\text{b}}\big(0;c-1,\frac{k-1}{k}\big)=\big(\frac 1 k \big)^{c-1}$. 
\item Note that condition \eqref{eqn:boundnc} is not restrictive. For instance, pick $c=500$ and $k=2000$, then $s^{(k)}_c=3$. Usually $s^{(k)}_c$ does not have a closed form, however, one can start trying from very small number, say, $m=2$.
\item When $B=A^T$, we have $\|\hat S\|_2\leq \frac{k(s^{(k)}_c-1)}{c}\|AA^T\|_2$.
\end{enumerate} 
\end{remark}
Finally, the random sampling procedure for matrix multiplication based on an arbitrary partition is given in Algorithm \ref{alg:pairwisesampling2}. 
\begin{algorithm}
\caption{Random sampling for matrix multiplication based on an arbitrary partition.}
\label{alg:pairwisesampling2}
\begin{algorithmic}[1]
\State \textbf{input:} $A$ and $B$, the targeted matrices for doing matrix multiplication;
\State \hspace{1.1cm} $c$, the sample size;
\State \hspace{1.1cm} $\Theta^{(\kappa)}$ with $|\Theta^{(\kappa)}|=k$, the targeted partition of collections of indices;
\State \hspace{1.1cm} $(p_{\ell})_{\ell=1}^k$, the sampling probability  to $\Theta^{(\kappa)}$.
\State \textbf{output:} $\hat S$, the sketched version of $AB$.
\State \textbf{initialization}: $\hat S$.
\For {$i=1\cdots c$ (the number of iterations)}
\State random pick an index $r_i$ from $1$ to $k$ based on probabilities $(p_{\ell})_{\ell=1}^k$;
\State set $\hat S=\hat S+ \frac{1}{cp_{r_i}}A_{(,\Theta^{(\kappa)}_{r_i})}B_{(\Theta^{(\kappa)}_{r_i},)}$,
\EndFor
\State \textbf{return}: $\hat S$.
\end{algorithmic}
\end{algorithm}
\section{A random sampling strategy for matrix multiplication based on pairwise partition}\label{sec:pairwise}
Random sampling based on a coarser partition discussed in Section \ref{sec:RamSam} also suggests a way to acceralate the random sampling procedure based on the finest partition, i.e., algorithm B{\footnotesize ASIC}M{\footnotesize ATRIX}M{\footnotesize ULTIPLICATION} in \cite{DKM06}. Though converging with order $\frac{1}{2}$, which is intrinsically given by Monte Carlo method, different choices of sampling probabilities may still affect slightly the convergence speed of B{\footnotesize ASIC}M{\footnotesize ATRIX}M{\footnotesize ULTIPLICATION}. Needless to say, with the optimal probability, which minimizes the expected bound of $\|AB-\hat S\|^2_F$, B{\footnotesize ASIC}M{\footnotesize ATRIX}M{\footnotesize ULTIPLICATION} will give the best performance in terms of convergence speed. In practice, however, we usually attempt to approximate $AB$ in one run. As discussed in Section \ref{sec:intro}, when the optimal sampling probability is closed to uniform, B{\footnotesize ASIC}M{\footnotesize ATRIX}M{\footnotesize ULTIPLICATION} may not provide an accurate solution when $c$ is not large enough. With $c$ fixed, an idea of grouping columns of $A$ and rows of $B$ in pairs (or in any pattern) in order to enrich the sampling procedure is introduced in this Section, which can be shown to have a better performance in both average and pathwise sense.

Now assume that $n$ is an even number. Suppose we have assigned the optimal random sampling probability for the finest partition $\Theta$, i.e., 
the probability of drawing the $\ell$th index among $1,2,\ldots,n$ is given as $p^{(o)}_\ell$ according to \eqref{optimalmatrix1} for each $\ell\in\{1,2,\ldots,n\}$. 
Then we may define a one-to-one correspondence $I:\{1,2\ldots,n\}\to \{1,2\ldots,n\}$ according to some sorting rule, and reorder the indices as $\{I_1, I_2, \ldots, I_n\}$ where $I_\ell$ is short for $I(\ell)$. 
 
Paring indices $I_{2j-1}$ and $I_{2j}$ for $j\in\{1,2,\ldots,\frac{n}{2}\}$ yields
\begin{align}\label{eqn:pairing}
\Theta^{(\text{pair})}=\big\{\{I_1,I_2\},\{I_3,I_{4}\},\ldots,\{I_{n-1},I_{n}\}\big\}
\end{align}
and simply assigning the sampling probability 
\begin{align}\label{eqn:ppair}
 p^{\text{(pair)}}_{j}:=p^{(o)}_{I_{2j-1}}+p^{(o)}_{I_{2j}}
\end{align} 
to $\Theta^{(\text{pair})}_j=\{I_{2j-1},I_{2j}\}$ (which is indeed to $j$) and getting $\hat S$ as in \eqref{hatSmatrix}. The algorithm is illustrated in Algrorithm \ref{alg:pairwisesampling1}.
\begin{algorithm}
\caption{Random sampling for matrix multiplication based on pairwise partition.}
\label{alg:pairwisesampling1}
\begin{algorithmic}[1]
\State \textbf{input:} $A$ and $B$, the targeted matrices for doing matrix multiplication;
\State \hspace{1.1cm} $c$, sampler size;
\State \hspace{1.1cm} $p^{(o)}$, optimal sampling probabilities to $\Theta$.
\State \textbf{output:} $\hat S$, the sketched version of $AB$.
\State \textbf{initialization}:  $I$, a vector of size $n$ to store indices; $\hat S$; $p^{\text{(pair)}}$.
\State reorder $\{1,2,\ldots,n\}$ and record in $I$ (like in \eqref{eqn:pairing});
\State get $p^{\text{(pair)}}$ via \eqref{eqn:ppair};
\For {$i=1\cdots c$ (the number of iterations)}
\State random pick an index $r_i$ from $1$ to $\frac{n}{2}$ based on probabilities $(p^{\text{(pair)}}_\ell)_{\ell=1}^{\frac{n}{2}}$;
\State set $\hat S=\hat S+ \frac{1}{cp^{\text{(pair)}}_{r_i}}(A_{(,I_{2r_i-1})}+A_{(,I_{2r_i})})(B_{(I_{2r_i-1},)}+B_{(I_{2r_i},)})$
\EndFor
\State \textbf{return}: $\hat S$.
\end{algorithmic}
\end{algorithm}

Note that $\Theta^{(\text{pair})}$ can be constructed from any pairwise partition and the sampling probability can be defined accordingly though \eqref{eqn:ppair}. Here we list four pairing strategies:
\begin{description}
\item[Enhanced pairwise partition.] That is, we may sort $\big(p^{(o)}_\ell\big)_{\ell=1}^n$ in an ascending order, and define $(I_\ell)_{\ell=1}^n$ with $I_\ell\in\{1,2,\ldots,n\}$ such that $p^{(o)}_{I_1}\leq p^{(o)}_{I_2}\leq \ldots\leq p^{(o)}_{I_n}$. Equivalently, pairing the index with the smallest probability and the index with the second smallest probability, paring the index with the third smallest probability and the index with the fourth smallest probability, so on until we are left with the last pair.
\item[Random pairwise partition.] That is, we may sort $\{1,2,\ldots,n\}$ in a random order (random permutation) and define the new sequence as $I$.
\item[Balanced pairwise partition.] That is, we may first sort $\big(p^{(o)}_\ell\big)_{\ell=1}^n$ in an ascending order, and define $(\hat{I}_\ell)_{\ell=1}^n$ with $\hat{I}_\ell\in\{1,2,\ldots,n\}$ such that $p^{(o)}_{\hat{I}_1}\leq p^{(o)}_{\hat{I}_2}\leq \ldots\leq p^{(o)}_{\hat{I}_n}$; and then define $(I_\ell)_{\ell=1}^n$ with $I_\ell\in\{1,2,\ldots,n\}$ such that $I_{2j-1}=\hat{I}_{n-j}$ and $I_{2j}=\hat{I}_{j}$ for $j\in\{1,2,\ldots,\frac{n}{2}\}$. Equivalently, pairing the index with the largest probability and the index with the smallest probability, paring the index with the second largest probability and the index with the second smallest probability, so on until we are left with the last pair.
\item[Simple pairwise partition.] That is, we may simply pair the first column with the second one, the third column with the fourth one, and so on; equivalently, set $I(\ell)=\ell$ for all $\ell\in\{1,2,\ldots,n\}$. 
\end{description}

Some approximation errors are measured to examine the performance of Algorithm \ref{alg:pairwisesampling1}.
\begin{corollary}\label{cor:argvar} Fixed with sample size $c$. The expected value of the squared approximation error in Frobenius norm from Algorithm \ref{alg:pairwisesampling1} is
\begin{align}\label{eqn:varcor} 
\mathbb{E}_{p^{(\text{pair})}}[\|AB-\hat S\|^2_F]\leq V_{p^{(o)}}(AB).
\end{align}
\end{corollary}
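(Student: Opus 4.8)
The plan is to recognize that Corollary \ref{cor:argvar} is essentially an instance of Corollary \ref{cor:largestapproerror} applied to the specific coarser partition $\Theta^{(\text{pair})}$. Indeed, $\Theta^{(\text{pair})}$ is a coarser partition of $\{1,2,\ldots,n\}$ (obtained by grouping the indices into pairs via the one-to-one correspondence $I$), and the probability $p^{(\text{pair})}$ defined in \eqref{eqn:ppair} is a valid sampling probability for it since $\sum_{j=1}^{n/2} p^{(\text{pair})}_j = \sum_{j=1}^{n/2}(p^{(o)}_{I_{2j-1}} + p^{(o)}_{I_{2j}}) = \sum_{\ell=1}^n p^{(o)}_\ell = 1$. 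Therefore, by Proposition \ref{prop:minsigma}, the left-hand side of \eqref{eqn:varcor} is an expected squared Frobenius error for a coarser partition, and it is at least as large as the optimal such error $V_{p^{(\text{pair})}}(AB)$ for that partition (with the optimal probability \eqref{optimalmatrix}); then Corollary \ref{cor:largestapproerror} gives $V_{p^{(\text{pair})}}(AB) \le V_{p^{(o)}}(AB)$.

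First I would spell out that $\mathbb{E}_{p^{(\text{pair})}}[\|AB - \hat S\|^2_F]$, computed via \eqref{eqn:var} with $k = n/2$, equals
$$
\frac{1}{c}\sum_{j=1}^{n/2}\frac{1}{p^{(\text{pair})}_j}\big\|(A_{(,I_{2j-1})}+A_{(,I_{2j})})(B_{(I_{2j-1},)}+B_{(I_{2j},)})\big\|_F^2 - \frac{\|AB\|_F^2}{c}.
$$
Next, invoking Proposition \ref{prop:minsigma} once more, the optimal probability \eqref{optimalmatrix} for the partition $\Theta^{(\text{pair})}$ minimizes this quantity over all choices of sampling probability, so in particular
$$
\mathbb{E}_{p^{(\text{pair})}}[\|AB - \hat S\|^2_F] \ge V_{p^{(\text{pair})}}(AB),
$$
where $V_{p^{(\text{pair})}}(AB)$ denotes the value \eqref{eqn:optimalvar} evaluated on $\Theta^{(\text{pair})}$. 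Finally, Corollary \ref{cor:largestapproerror}, applied with $\Theta^{(\kappa)} = \Theta^{(\text{pair})}$, yields $V_{p^{(\text{pair})}}(AB) \le V_{p^{(o)}}(AB)$, and chaining the two inequalities gives \eqref{eqn:varcor}. Wait — I should double-check the direction: Corollary \ref{cor:argvar} claims the \emph{actual} error under $p^{(\text{pair})}$ is bounded above by $V_{p^{(o)}}(AB)$, but the chain above only shows it is bounded \emph{below} by $V_{p^{(\text{pair})}}(AB) \le V_{p^{(o)}}(AB)$, which is the wrong direction. So the argument cannot go through the sub-optimality inequality.

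The main obstacle, then, is that $p^{(\text{pair})}$ is \emph{not} the optimal probability for $\Theta^{(\text{pair})}$, so one must argue directly that $p^{(\text{pair})}$ itself already does at least as well as $p^{(o)}$ on the finest partition. The key computation is to compare, term by term, the sum $\frac{1}{c}\sum_{j} \frac{1}{p^{(\text{pair})}_j}\|\cdots\|_F^2$ against $\frac{1}{c}\sum_{\ell}\frac{1}{p^{(o)}_\ell}\|A_{(,\ell)}\|_F^2\|B_{(\ell,)}\|_F^2$ appearing in \eqref{eqn:var1}. For a single pair $\{I_{2j-1},I_{2j}\}$, writing $a_1 = A_{(,I_{2j-1})}$, $a_2 = A_{(,I_{2j})}$, $b_1 = B_{(I_{2j-1},)}$, $b_2 = B_{(I_{2j},)}$, $q_1 = p^{(o)}_{I_{2j-1}}$, $q_2 = p^{(o)}_{I_{2j}}$, one must show
$$
\frac{1}{q_1+q_2}\big\|(a_1+a_2)(b_1+b_2)\big\|_F^2 \le \frac{1}{q_1}\|a_1\|_F^2\|b_1\|_F^2 + \frac{1}{q_2}\|a_2\|_F^2\|b_2\|_F^2.
$$
This is the heart of the matter: one would expand the left side, use $\|a_ib_j\|_F = \|a_i\|_F\|b_j\|_F$ (since each is an outer product), bound the cross term $2\langle a_1b_1, a_2b_2\rangle_F \le 2\|a_1\|_F\|b_1\|_F\|a_2\|_F\|b_2\|_F$ by Cauchy–Schwarz, and then verify the resulting scalar inequality $\frac{(x+y)^2}{q_1+q_2} \le \frac{x^2}{q_1} + \frac{y^2}{q_2}$ with $x = \|a_1\|_F\|b_1\|_F$, $y = \|a_2\|_F\|b_2\|_F$, which is exactly the Cauchy–Schwarz / convexity inequality $(x+y)^2(q_1^{-1}+q_2^{-1})^{-1}\cdot$(reciprocal form), equivalently $q_2 x^2 + q_1 y^2 \ge 2\sqrt{q_1 q_2}\,xy$ after clearing denominators. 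Summing over $j$ and subtracting $\|AB\|_F^2/c$ from both sides then gives $\mathbb{E}_{p^{(\text{pair})}}[\|AB-\hat S\|^2_F] \le \mathbb{E}_{p^{(o)}}[\|AB-\hat S\|^2_F] = V_{p^{(o)}}(AB)$, which is the claim. The only subtlety to watch is the sign of the cross term: the bound $2\langle a_1b_1,a_2b_2\rangle_F \le 2xy$ holds regardless, so the pairwise inequality is robust, and no assumption on the sorting rule $I$ is needed for this particular corollary.
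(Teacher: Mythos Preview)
Your second argument is correct and is essentially the paper's proof. Both hinge on the same key step: the triangle inequality
\[
\big\|A_{(,\{I_{2j-1},I_{2j}\})}B_{(\{I_{2j-1},I_{2j}\},)}\big\|_F
\;\le\;\|A_{(,I_{2j-1})}\|_2\|B_{(I_{2j-1},)}\|_2+\|A_{(,I_{2j})}\|_2\|B_{(I_{2j},)}\|_2.
\]
The paper then substitutes the explicit form of $p^{(o)}$ and simplifies the resulting sum directly to $V_{p^{(o)}}(AB)$, whereas you interpose the Cauchy--Schwarz (Titu) inequality $(x+y)^2/(q_1+q_2)\le x^2/q_1+y^2/q_2$ before summing. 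For $q_i=p^{(o)}_{I_i}$ that inequality is an \emph{equality}, so the two finishes coincide; your route has the minor bonus that it proves the same bound for \emph{any} underlying probability on $\Theta$, i.e.\ it already yields Corollary~\ref{cor:argvar1}.

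One notational slip to fix: the block product $A_{(,\{I_{2j-1},I_{2j}\})}B_{(\{I_{2j-1},I_{2j}\},)}$ equals $a_1b_1+a_2b_2$, not $(a_1+a_2)(b_1+b_2)$. Your expansion---with only the single cross term $2\langle a_1b_1,a_2b_2\rangle_F$---shows you are computing the right object, but the displayed expressions should be written as $a_1b_1+a_2b_2$ (the expression in Algorithm~\ref{alg:pairwisesampling1} is itself a typo in this respect).
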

As Algorithm \ref{alg:pairwisesampling1} is intrinsically based on $\Theta$ with optimal probability $p^{(o)}$ and the corresponding sampling probability has been increased in \eqref{eqn:ppair}, it is not surprising that the approximation error bound in Frobenius norm from Algorithm \ref{alg:pairwisesampling1} is bounded by that from algorithm B{\footnotesize ASIC}M{\footnotesize ATRIX}M{\footnotesize ULTIPLICATION} with optimal probability $p^{(o)}$.

Note that Corollary \ref{cor:argvar} is valid for Algorithm \ref{alg:pairwisesampling1} with arbitrary pairing strategy.  We cannot distinguish the performances of Algorithm \ref{alg:pairwisesampling1} with different pairing strategies. A further investigation following the same argument of Proposition \ref{prop:2norm} suggests that, among all pairing strategies, Algorithm \ref{alg:pairwisesampling1} with {\it enhanced pairwise partition} gives the best approximation in probability sense.
\begin{corollary}\label{cor:lowerbound} Fixed with sample size $c$. Compared with B{\footnotesize ASIC}M{\footnotesize ATRIX}M{\footnotesize ULTIPLICATION},  Algorithm \ref{alg:pairwisesampling1} has a smaller upper bound for the probability $\mathbb{P}_{p^{(o)}}(\|AB-\hat S\|_2> \epsilon)$, where $\hat S$ is a generic symbol for the resulted approximation of $AB$ either via B{\footnotesize ASIC}M{\footnotesize ATRIX}M{\footnotesize ULTIPLICATION} or Algorithm \ref{alg:pairwisesampling1}. Furthermore, among all possible pairing strategies, Algorithm \ref{alg:pairwisesampling1} with enhanced pairwise partition gives the smallest upper bound.
\end{corollary}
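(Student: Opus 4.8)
The plan is to apply Proposition~\ref{prop:2norm} to \textsc{BasicMatrixMultiplication} (finest partition $\Theta$, sampling law $p^{(o)}$) and to Algorithm~\ref{alg:pairwisesampling1} (partition $\Theta^{(\text{pair})}$, sampling law $p^{(\text{pair})}$), both of which are instances of the generic scheme~\eqref{hatS}, and then to compare the two resulting tail bounds~\eqref{eqn:bound2norm} term by term. The right-hand side of~\eqref{eqn:bound2norm} depends on the sampling law only through the scalars $M$, $\mathcal U_1$, $\mathcal U_2$, it is coordinatewise non-decreasing in each of them, and the prefactor $m+\rho$ and the constants $c$, $\|AB\|_2$ are common to the two settings; so it suffices to show that passing from \textsc{BasicMatrixMultiplication} to Algorithm~\ref{alg:pairwisesampling1} does not increase any of $M$, $\mathcal U_1$, $\mathcal U_2$, and --- for the second assertion --- that among all pairings the enhanced one minimises all three.

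For \textsc{BasicMatrixMultiplication}, put $w_\ell:=\|A_{(,\ell)}\|_F\|B_{(\ell,)}\|_F$ and $M_o:=\sum_{\ell=1}^n w_\ell$; since $\|A_{(,\ell)}B_{(\ell,)}\|_F=w_\ell$ for a rank-one outer product and $p^{(o)}_\ell=w_\ell/M_o$, this gives $M=\mathcal U_1(p^{(o)})=M_o$ and $\mathcal U_2(p^{(o)})=M_o^2$. For Algorithm~\ref{alg:pairwisesampling1}, write $W_j:=\big\|A_{(,\Theta^{(\text{pair})}_j)}B_{(\Theta^{(\text{pair})}_j,)}\big\|_F=\big\|A_{(,I_{2j-1})}B_{(I_{2j-1},)}+A_{(,I_{2j})}B_{(I_{2j},)}\big\|_F$. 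By~\eqref{eqn:ppair} and~\eqref{optimalmatrix1}, $p^{(\text{pair})}_jM_o=w_{I_{2j-1}}+w_{I_{2j}}$, so the triangle inequality delivers the single estimate
\[
W_j\le w_{I_{2j-1}}+w_{I_{2j}}=p^{(\text{pair})}_jM_o,\qquad 1\le j\le n/2 .
\]
Summing it gives $M_{\text{pair}}=\sum_j W_j\le M_o$; dividing by $p^{(\text{pair})}_j$ and maximising gives $\mathcal U_1(p^{(\text{pair})})\le M_o$; and applying it to one factor of $W_j^2$ gives $\mathcal U_2(p^{(\text{pair})})=\sum_j W_j^2/p^{(\text{pair})}_j\le M_o\sum_j W_j\le M_o^2$. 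By the monotonicity above, the bound~\eqref{eqn:bound2norm} for Algorithm~\ref{alg:pairwisesampling1} is then at most that for \textsc{BasicMatrixMultiplication} for every $\epsilon>0$; since $p^{(\text{pair})}$ is a deterministic function of $p^{(o)}$, this is the first assertion.

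For the second assertion only the bijection $I$ --- i.e.\ the pairing --- varies, so minimising~\eqref{eqn:bound2norm} over pairings amounts to minimising the triple $\big(\sum_j W_j,\ \max_j W_j/(w_{I_{2j-1}}+w_{I_{2j}}),\ \sum_j W_j^2/(w_{I_{2j-1}}+w_{I_{2j}})\big)$. I would first pass to the weight-only value $\sqrt{w_{I_{2j-1}}^2+w_{I_{2j}}^2}$ in place of $W_j$ --- exact when the rank-one blocks $A_{(,I_{2j-1})}B_{(I_{2j-1},)}$ and $A_{(,I_{2j})}B_{(I_{2j},)}$ are Frobenius-orthogonal, and in general the leading term since $W_j^2=w_{I_{2j-1}}^2+w_{I_{2j}}^2+2\langle A_{(,I_{2j-1})}B_{(I_{2j-1},)},A_{(,I_{2j})}B_{(I_{2j},)}\rangle_F$ with $|\langle\cdot,\cdot\rangle_F|\le w_{I_{2j-1}}w_{I_{2j}}$ by Cauchy--Schwarz --- and then invoke the rearrangement lemma: \emph{if $w_1\le\cdots\le w_n$, the enhanced pairing $\{w_1,w_2\},\{w_3,w_4\},\dots$ simultaneously minimises $\sum\sqrt{u^2+v^2}$, $\sum\frac{u^2+v^2}{u+v}$ and $\max\frac{\sqrt{u^2+v^2}}{u+v}$ over all matchings of $\{w_1,\dots,w_n\}$ into pairs $\{u,v\}$.} The two sums follow from a four-element exchange argument: for $s\le t\le u\le v$ the pair $(s{+}t,\,u{+}v)$ majorises both $(s{+}u,\,t{+}v)$ and $(s{+}v,\,t{+}u)$, so Schur-concavity of $a\mapsto\sqrt a$ settles the first, while the identity $\frac{x^2+y^2}{x+y}=(x+y)-\frac{2xy}{x+y}$ reduces the second to the elementary $\frac1{s+t}+\frac1{u+v}\ge\frac1{s+u}+\frac1{t+v}$. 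The maximum follows because $\frac{\sqrt{u^2+v^2}}{u+v}$ is increasing in $\max(u,v)/\min(u,v)$, while a parity/pigeonhole argument forces every matching to contain a pair whose element-ratio is at least $\max_j w_{2j}/w_{2j-1}$, the largest such ratio in the enhanced pairing. Together these give that the enhanced pairing minimises the whole triple, hence the tail bound.

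The step I expect to be the real obstacle is this weight-only reduction in the second assertion: $W_j$ genuinely depends on the inner products $A_{(,I_{2j-1})}^{T}A_{(,I_{2j})}$ and $B_{(I_{2j-1},)}B_{(I_{2j},)}^{T}$, not merely on the weights $w_\ell$, so an unconditional proof that the enhanced pairing is optimal needs either a structural hypothesis forcing $W_j^2=w_{I_{2j-1}}^2+w_{I_{2j}}^2$ (for instance $B=A^{T}$ with the selected columns of $A$ mutually orthogonal) or a further estimate on how $\sum_j\langle\cdot,\cdot\rangle_F/(w_{I_{2j-1}}+w_{I_{2j}})$ depends on the matching; everything in the first assertion, by contrast, uses only the triangle inequality, the rank-one Frobenius identity, and the monotonicity of~\eqref{eqn:bound2norm}.
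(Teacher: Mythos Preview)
Your argument for the first assertion is correct and in fact tidier than the paper's: you quote Proposition~\ref{prop:2norm} as a black box and show that $M$, $\mathcal U_1$, $\mathcal U_2$ all shrink under pairing, using only the rank-one identity $\|A_{(,\ell)}B_{(\ell,)}\|_F=w_\ell$, the triangle inequality $W_j\le w_{I_{2j-1}}+w_{I_{2j}}$, and the explicit form of $p^{(o)}$. The paper does \emph{not} invoke Proposition~\ref{prop:2norm}; instead it goes back to the Bernstein inequality directly and estimates the two ingredients $\|Y-AB\|_2$ and $\|\mathbb E[(Y-AB)^T(Y-AB)]\|_2$ via a different splitting,
\[
\Big\|AB-\tfrac{1}{p^{(o)}_r}A_{(,r)}B_{(r,)}\Big\|_2\le\Big\|\sum_{\ell\neq r}A_{(,\ell)}B_{(\ell,)}\Big\|_2+\tfrac{1-p^{(o)}_r}{p^{(o)}_r}\big\|A_{(,r)}B_{(r,)}\big\|_2=2\sum_{\ell\neq r}w_\ell,
\]
and analogously for the paired version. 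This yields bounds $M_1,M_2,u_1,u_2$ written \emph{entirely in the weights $w_\ell$}, with no $W_j$'s appearing; the comparison $M_2\le M_1$, $u_2\le u_1$ is then immediate because removing two indices from a sum removes at least as much as removing one.

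This difference of route is what resolves --- or rather sidesteps --- the obstacle you flag in the second assertion. Because the paper's $M_2$ and $u_2$ are already pure functions of the multiset $\{w_\ell\}$ and the matching, the minimisation over pairings is a combinatorial question about weights only; no ``weight-only reduction'' from $W_j$ to $\sqrt{w_{I_{2j-1}}^2+w_{I_{2j}}^2}$ is needed, no Frobenius inner products enter, and no orthogonality hypothesis on the columns is required. Your rearrangement lemma is therefore aimed at the wrong target: the quantities the paper actually minimises are $\max_j\big(M_o-w_{I_{2j-1}}-w_{I_{2j}}\big)$ and $\sum_j(w_{I_{2j-1}}+w_{I_{2j}})\big(M_o-w_{I_{2j-1}}-w_{I_{2j}}\big)^2$, not the $W_j$-based triple you set up. In short, your first half is a genuine alternative proof that buys simplicity; your second half runs into exactly the data-dependence you identify, and the paper's way around it is to work with coarser, weight-only Bernstein constants from the outset rather than with the tighter $W_j$-based constants of Proposition~\ref{prop:2norm}.
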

\begin{remark}The proof of Corollary \ref{cor:lowerbound} also suggests that, Algorithm \ref{alg:pairwisesampling1} with any pairing strategy would give a smaller upper bound for the probability $\mathbb{P}_{p^{(o)}}(\|AB-\hat S\|_2> \epsilon)$ than B{\footnotesize ASIC}M{\footnotesize ATRIX}M{\footnotesize ULTIPLICATION}. This might be due to the fact that pairing indices together will increase the likelihood of each index and thus each column and row being drawn. For instance, \eqref{eqn:ppair} increases the probability for both indices $I_{2\ell-1}$ and $I_{2\ell}$ from $p^{(o)}_{I_{2\ell-1}}$ and $p^{(o)}_{I_{2\ell}}$ respectively to $p^{(\text{pair})}_\ell$ at the same time. Within $c$ independent trials, more information about $AB$ will be brought in through Algorithm \ref{alg:pairwisesampling1} with pairing strategy, thus $\hat S$ will be more informative and gives a better approximation in one run with larger probability.
\end{remark}
Indeed Corollary \ref{cor:argvar} can be extended to any coarser partition. Given $\Theta^{(\kappa)}$ with $|\Theta^{(\kappa)}|=k$. Define sampling distribution $p^{(\kappa,o)}$ from the optimal probability of the finest partition $\Theta$ as
\begin{align}\label{eqn:pkappao}
p^{(\kappa,o)}_\ell=\sum_{d_\ell=1}^{|\Theta^{(\kappa)}_\ell |} p^{(o)}_{\Theta^{(\kappa)}_{\ell,d_\ell}}, \ \ \text{for}\ \ \ell\in\{1,2,\ldots,k\}.
\end{align}
Then we can implement approximation procedure for $AB$ as described in \eqref{hatSmatrix} and conclude in the same fashion as Corollary \ref{cor:argvar}:
\begin{corollary}\label{cor:argvar1} Given $\Theta^{(\kappa)}$ with $|\Theta^{(\kappa)}|=k$ and the sampling distribution $p^{(\kappa,o)}$  as defined in \eqref{eqn:pkappao}. If fixed with sample size $c$, then the expected value of the squared approximation error in Frobenius norm from  \eqref{hatSmatrix} satisfies
\begin{align}\label{eqn:varcor1} 
\mathbb{E}_{p^{(\kappa,o)}}[\|AB-\hat S\|^2_F]\leq V_{p^{(o)}}(AB).
\end{align}
\end{corollary}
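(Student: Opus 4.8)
The plan is to write both sides of \eqref{eqn:varcor1} explicitly by means of the variance formulas already established, cancel the common term $-\|AB\|_F^2/c$, and reduce the claim to a scalar inequality that can be proved blockwise by Cauchy--Schwarz. First I would apply Proposition \ref{prop:minsigma} to $\Theta^{(\kappa)}$ with the probability $p^{(\kappa,o)}$ defined in \eqref{eqn:pkappao}, which gives
$$\mathbb{E}_{p^{(\kappa,o)}}[\|AB-\hat S\|^2_F]=\frac{1}{c}\sum_{\ell=1}^k\frac{\big\|A_{(,\Theta^{(\kappa)}_\ell)}B_{(\Theta^{(\kappa)}_\ell,)}\big\|_F^2}{\sum_{d_\ell=1}^{|\Theta^{(\kappa)}_\ell|}p^{(o)}_{\Theta^{(\kappa)}_{\ell,d_\ell}}}-\frac{\|AB\|_F^2}{c},$$
whereas Proposition \ref{prop:minsigma1} applied with $p=p^{(o)}$ (equivalently \eqref{eqn:optimalvar1}) gives
$$V_{p^{(o)}}(AB)=\frac{1}{c}\sum_{j=1}^n\frac{\|A_{(,j)}\|_F^2\|B_{(j,)}\|_F^2}{p^{(o)}_j}-\frac{\|AB\|_F^2}{c}.$$
Note that $\sum_{\ell=1}^k p^{(\kappa,o)}_\ell=\sum_{j=1}^n p^{(o)}_j=1$, so $p^{(\kappa,o)}$ is a legitimate sampling probability. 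Subtracting off the equal terms, \eqref{eqn:varcor1} is therefore equivalent to
$$\sum_{\ell=1}^k\frac{\big\|A_{(,\Theta^{(\kappa)}_\ell)}B_{(\Theta^{(\kappa)}_\ell,)}\big\|_F^2}{\sum_{d_\ell=1}^{|\Theta^{(\kappa)}_\ell|}p^{(o)}_{\Theta^{(\kappa)}_{\ell,d_\ell}}}\ \leq\ \sum_{j=1}^n\frac{\|A_{(,j)}\|_F^2\|B_{(j,)}\|_F^2}{p^{(o)}_j},$$
an inequality purely about nonnegative scalars.

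Next I would prove this term by term. Fixing $\ell$, write the indices of $\Theta^{(\kappa)}_\ell$ as $i_1,\ldots,i_s$ with $s=|\Theta^{(\kappa)}_\ell|$, and set $a_t:=\|A_{(,i_t)}\|_F\|B_{(i_t,)}\|_F$ and $q_t:=p^{(o)}_{i_t}$. Since $A_{(,\Theta^{(\kappa)}_\ell)}B_{(\Theta^{(\kappa)}_\ell,)}=\sum_{t=1}^s A_{(,i_t)}B_{(i_t,)}$, the triangle inequality for $\|\cdot\|_F$ together with the rank-one identity $\|A_{(,i_t)}B_{(i_t,)}\|_F=\|A_{(,i_t)}\|_F\|B_{(i_t,)}\|_F$ (a column times a row) yields $\big\|A_{(,\Theta^{(\kappa)}_\ell)}B_{(\Theta^{(\kappa)}_\ell,)}\big\|_F\leq\sum_{t=1}^s a_t$. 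Applying Cauchy--Schwarz to the vectors $(a_t/\sqrt{q_t})_{t=1}^s$ and $(\sqrt{q_t})_{t=1}^s$ gives $\big(\sum_{t=1}^s a_t\big)^2\leq\big(\sum_{t=1}^s a_t^2/q_t\big)\big(\sum_{t=1}^s q_t\big)$, hence
$$\frac{\big\|A_{(,\Theta^{(\kappa)}_\ell)}B_{(\Theta^{(\kappa)}_\ell,)}\big\|_F^2}{\sum_{t=1}^s q_t}\ \leq\ \sum_{t=1}^s\frac{a_t^2}{q_t}=\sum_{t=1}^s\frac{\|A_{(,i_t)}\|_F^2\|B_{(i_t,)}\|_F^2}{p^{(o)}_{i_t}}.$$

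Finally I would sum this over $\ell=1,\ldots,k$: because $\{\Theta^{(\kappa)}_1,\ldots,\Theta^{(\kappa)}_k\}$ partitions $\{1,\ldots,n\}$, the right-hand sides add up to $\sum_{j=1}^n\|A_{(,j)}\|_F^2\|B_{(j,)}\|_F^2/p^{(o)}_j$, which is exactly the scalar inequality displayed above, so \eqref{eqn:varcor1} follows; Corollary \ref{cor:argvar} is the special case in which every block has size $s=2$. I do not expect any genuinely hard step: the only points needing care are the rank-one Frobenius identity and the observation that the denominators $\sum_{d_\ell}p^{(o)}_{\Theta^{(\kappa)}_{\ell,d_\ell}}$ coincide with the block sums $\sum_t q_t$, which is what makes the Cauchy--Schwarz step line up. I would also note that this does not follow from Corollary \ref{cor:largestapproerror}, since $p^{(\kappa,o)}$ is in general not the optimal probability $p^{(\kappa)}$ of $\Theta^{(\kappa)}$; optimality only gives $\mathbb{E}_{p^{(\kappa,o)}}[\|AB-\hat S\|_F^2]\geq V_{p^{(\kappa)}}(AB)$, the wrong direction, so the direct blockwise estimate above is what is needed to bound the error above by $V_{p^{(o)}}(AB)$.
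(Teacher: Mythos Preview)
Your proof is correct and follows essentially the same route as the paper: the paper gives no separate proof for this corollary but says to argue ``in the same fashion as Corollary~\ref{cor:argvar}'', whose proof applies the variance formula \eqref{eqn:var}, bounds each block norm via the triangle inequality, and then simplifies by substituting the explicit form of $p^{(o)}$. Your Cauchy--Schwarz step is exactly that simplification in disguise (in fact an equality here, since $q_t\propto a_t$), so the two arguments coincide.
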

This would imply that the sampling procedure based on an arbitrary coarser partition with the assigned sampling probability \eqref{eqn:pkappao} would provide a nicer approximation for $AB$ regarding the approximation error in Frobenius norm. The same argument mentioned after Corollary \ref{cor:largestapproerror} applies here as well. The sampling procedure based on the finest partition $\Theta$ can be treated as first conducting a sampling procedure towards $\Theta^{(\kappa)}$, and then approximating $AB$ based on the approximation of partition $\Theta^{(\kappa)}$. Therefore the approximation error based on partition $\Theta^{(\kappa)}$ is not bigger than the one based on the finest partition. Corollary \ref{cor:argvar1} allows the flexibility of choosing any coarser partition for approximating $AB$ and Algorithm \ref{alg:pairwisesampling1} needs to be tailored accordingly. 

\subsection{An experiment}\label{sec:ne}
In this part, we compare the performances of Algorithm \ref{alg:pairwisesampling1} and B{\footnotesize ASIC}M{\footnotesize ATRIX}M{\footnotesize ULTIPLICATION} in \cite{DKM06}, which only works on the finest partition $\Theta$. 

We generate a random matrix $A$ from standard uniform distribution with sizes $100\times 2000$. That is, we independently draw elements of $A$ from standard uniform distribution. This would give those columns with relatively equal column norm, which is indeed 2-norm of this column. 


We then implement algorithm B{\footnotesize ASIC}M{\footnotesize ATRIX}M{\footnotesize ULTIPLICATION} and Algorithm \ref{alg:pairwisesampling1} on {\it enhanced pariwise partition} with $c$ varying from $1000$ to $3000$ to approximate $AA^T$ and compare their performances by calculating the corresponding relative Frobenius norm error via a Monte Carlo simulation with 1000 independent samples, i.e., $\frac{1}{\|AA^T\|_F}\mathbb{E}[\|AA^T-\hat S\|_F]$. 

\begin{figure}
  \begin{center}
      \includegraphics[width=0.6\textwidth]{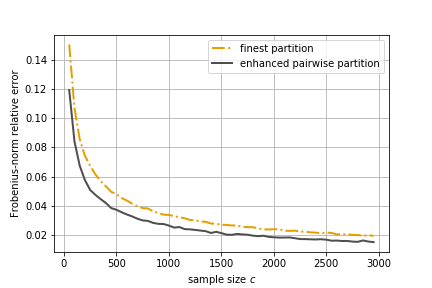}
                    \caption{\small Approximating $AA^T$:
      sample size $c$ versus Frobenius-norm relative error; orange dashed line represents error curve from algorithm B{\footnotesize ASIC}M{\footnotesize ATRIX}M{\footnotesize ULTIPLICATION} based on the finest partition, solid line represents error curve from Algorithm \ref{alg:pairwisesampling1} based on enhanced pairwise partition.
     \label{fig1}} 
  \end{center}
\end{figure}


Figure \ref{fig1} demonstrates the error trends for both algorithms. We can clearly see that the relative error generated from Algorithm \ref{alg:pairwisesampling1}, the solid line, is always below the dashed line, the one from B{\footnotesize ASIC}M{\footnotesize ATRIX}M{\footnotesize ULTIPLICATION}. This may verify the discussion in Section \ref{sec:pairwise} that the expected relative error from Algorithm \ref{alg:pairwisesampling1} is bounded by that from B{\footnotesize ASIC}M{\footnotesize ATRIX}M{\footnotesize ULTIPLICATION}.

To investigate the results, we collect the statistics for $p^{(o)}$ and $p^{\text{(pair)}}$ in Table \ref{tab1}. With the sampling probability based on enhanced pairwise partition, we increase the minimal value of the sampling probability while keeping almost the same the maximal value. Increasing the sampling probability results in more successful draws from each columns to some extent.


\begin{table}[ht]
\caption{   \label{tab1} Statistics for $p^{(o)}$ and $p^{\text{(pair)}}$.} 
\begin{center}
\begin{tabular}{ccc|ccc}
 \hline  \hline
$\max(p^{(o)})$ & $\text{mean}(p^{(o)})$ & $\min(p^{(o)})$ &$\max(p^{\text{(pair)}})$ & $\text{mean}(p^{\text{(pair)}})$ & $\min(p^{\text{(pair)}})$ \\
  \hline
0.00065 &0.00050& 0.00033& 0.00131 &0.00100 & 0.00070\\
\hline  \hline
\end{tabular}
\end{center}
\end{table}


\begin{figure}
\centering
\subfigure[a][$c=1000$]{\includegraphics[width=0.47\textwidth]{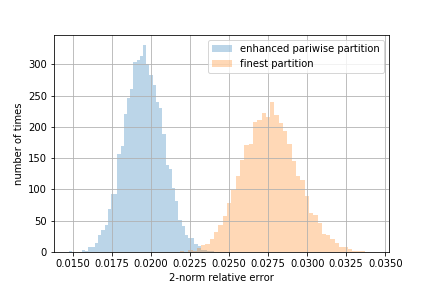}}\qquad
\subfigure[b][$c=3000$]{\includegraphics[width=0.47\textwidth]{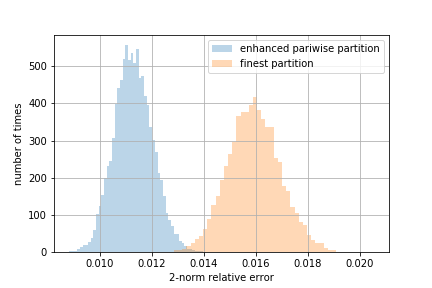}}\\
\caption{Histograms for approximation error: 2-norm relative error versus counts; the left figure is for sampler size $c=1000$, and the right figure is for sampler size $c=3000$; in both figures light blue shape indicates histogram from Algorithm \ref{alg:pairwisesampling1} based on enhancd pairwise partition and light orange shape indicates histogram from algorithm B{\footnotesize ASIC}M{\footnotesize ATRIX}M{\footnotesize ULTIPLICATION} based on the finest partition.\label{fig2}}
\end{figure}

Besides, we implement Algorithm \ref{alg:pairwisesampling1} and algorithm B{\footnotesize ASIC}M{\footnotesize ATRIX}M{\footnotesize ULTIPLICATION} $50000$ times in order to get shapes for distributions of 2-norm relative error under different sample sizes, $c=1000$, which is smaller than $n=2000$, and $c=3000$, bigger than $n$. In both cases, we can see from these histograms that the error distribution from Algorithm \ref{alg:pairwisesampling1}, the shape in light blue, is left peaked with higher maximum than the shape in light orange, the error distribution from algorithm B{\footnotesize ASIC}M{\footnotesize ATRIX}M{\footnotesize ULTIPLICATION}. This may imply that, it is more likely for Algorithm \ref{alg:pairwisesampling1} to approximate $AA^T$ via a smaller 2-norm error than algorithm B{\footnotesize ASIC}M{\footnotesize ATRIX}M{\footnotesize ULTIPLICATION}. In addition, the shapes for $c=3000$ as in Figure \ref{fig2} (b) are more closed to $0$. This consists with the fact that the approximation error vanishes as $c\to \infty$. 
\section{Conclusion}\label{sec:con}
This paper extends the framework of random sampling for matrix multiplication in \cite{DKM06} to a coarser partition. A detailed discussion on the performance is given through measuring the approximation error in Frobenius norm and 2-norm, see Proposition \ref{prop:unbias}, \ref{prop:minsigma} and \ref{prop:2norm}. In practice, B{\footnotesize ASIC}M{\footnotesize ATRIX}M{\footnotesize ULTIPLICATION} in \cite{DKM06} with the optimal probability that is more closed to a uniform distribution may lead to a relatively slow convergence. To improve the performance, Algorithm \ref{alg:pairwisesampling1} with parwise partition is proposed as a complement to B{\footnotesize ASIC}M{\footnotesize ATRIX}M{\footnotesize ULTIPLICATION}. In particular, the new algorithm increases the likelihood of getting a small approximation error in 2-norm and reduces the squared approximation error in Frobenious norm to be bounded by that from algorithm B{\footnotesize ASIC}M{\footnotesize ATRIX}M{\footnotesize ULTIPLICATION}. Furthermore, Algorithm \ref{alg:pairwisesampling1} can be modified to accommodate any coarser partition, which will give a smaller squared approximation error in Frobenious norm compared with B{\footnotesize ASIC}M{\footnotesize ATRIX}M{\footnotesize ULTIPLICATION}.
\section*{Acknowledgement}
 The author would gratefully acknowledge financial support by EPRSC EP/R041431/1.
 \bibliographystyle{alpha}
\footnotesize

\section*{Appendix}
\begin{proof}[Proof of Proposition \ref{prop:unbias}] The claim can be shown through the linear property of expectation and definition of expectation as follows
\begin{align*}
\mathbb{E}_{p}[\hat S]&=\mathbb{E}_{p}\Big[ \frac 1 c \sum_{i=1}^c \frac{1}{p_{r_i}} A_{(,\Theta^{(\kappa)}_{r_i})}B_{(\Theta^{(\kappa)}_{r_i},)}\Big]= \frac 1 c\sum_{i=1}^c\mathbb{E}_{p}\Big[  \frac{1}{p_{r_i}} A_{(,\Theta^{(\kappa)}_{r_i})}B_{(\Theta^{(\kappa)}_{r_i},)}\Big]\\
&= \frac 1 c\sum_{i=1}^c\sum_{\ell=1}^k \frac{p_{r_\ell}}{p_{r_\ell}} A_{(,\Theta^{(\kappa)}_{r_\ell})}B_{(\Theta^{(\kappa)}_{r_\ell},)}=\sum_{\ell=1}^k A_{(,\Theta^{(\kappa)}_{r_\ell})}B_{(\Theta^{(\kappa)}_{r_\ell},)}=AB.
\end{align*}
\end{proof}
\begin{proof}[Proof of Proposition \ref{prop:minsigma}]
 First of all, note that for all $1\leq h_1\leq m,1\leq h_2\leq \rho$ we have
$$\hat S_{(h_1,h_2)}=  \sum_{i=1}^c \frac{1}{cp_{r_i}} A_{(h_1,\Theta^{(\kappa)}_{r_i})}B_{(\Theta^{(\kappa)}_{r_i},h_2)}.$$ 
A simple consequence from $\mathbb{E}_{p}[\hat S]=AB$ is that $\mathbb{E}_{p}[\hat S_{(h_1,h_2)}]=(AB)_{(h_1,h_2)}$. We then have that
$$\mathbb{E}_p[\|AB-\hat S\|_F^2]=\sum_{h_1=1}^m \sum_{h_2=1}^\rho \mathbb{E}_p[(AB-\hat S)^2_{(h_1,h_2)}]=\sum_{h_1=1}^m \sum_{h_2=1}^\rho \textbf{Var}_{p}[\hat S_{(h_1,h_2)}].$$
For each pair $(h_1,h_2)$, we have that
\begin{align*}
&\textbf{Var}_{p}[\hat S_{(h_1,h_2)}]=\textbf{Var}_{p}\big[ \sum_{i=1}^c \frac{1}{cp_{r_i}} A_{(h_1,\Theta^{(\kappa)}_{r_i})}B_{(\Theta^{(\kappa)}_{r_i},h_2)}\big]= \sum_{i=1}^c\frac{1}{c^2}\textbf{Var}_{p}\big[ \frac{1}{p_{r_i}} A_{(h_1,\Theta^{(\kappa)}_{r_i})}B_{(\Theta^{(\kappa)}_{r_i},h_2)}\big]\\
&\quad =\sum_{i=1}^c\frac{1}{c^2}\mathbb{E}_{p}\big[( \frac{1}{p_{r_i}} A_{(h_1,\Theta^{(\kappa)}_{r_i})}B_{(\Theta^{(\kappa)}_{r_i},h_2)}\big)^2\big]-\sum_{i=1}^c\frac{1}{c^2}\mathbb{E}_{p}\big[( \frac{1}{p_{r_i}} A_{(h_1,\Theta^{(\kappa)}_{r_i})}B_{(\Theta^{(\kappa)}_{r_i},h_2)}\big)\big]^2\\
&\quad =\sum_{i=1}^c\frac{1}{c^2}\sum_{\ell=1}^kp_{\ell}( \frac{1}{p_{\ell}} A_{(h_1,\Theta^{(\kappa)}_\ell)}B_{(\Theta^{(\kappa)}_\ell,h_2)}\big)^2-\sum_{i=1}^c\frac{1}{c^2}\big(\sum_{\ell=1}^kp_{\ell}\frac{1}{p_\ell} A_{(h_1,\Theta^{(\kappa)}_\ell)}B_{(\Theta^{(\kappa)}_\ell,h_2)}\big)^2\\
&\quad=\frac{1}{c}\sum_{\ell=1}^k\frac{1}{p_{\ell}} \big(A_{(h_1,\Theta^{(\kappa)}_\ell)}B_{(\Theta^{(\kappa)}_\ell,h_2)}\big)^2-\frac{1}{c}(AB)_{(h_1,h_2)}^2,
\end{align*}
where the second equality holds because of independence. This in turn implies that
\begin{align*}
&\sum_{h_1=1}^m \sum_{h_2=1}^\rho\textbf{Var}_{p}[\hat S_{(h_1,h_2)}] =\frac{1}{c} \sum_{h_1=1}^m\sum_{h_2=1}^\rho\Big( \sum_{\ell=1}^k\frac{1}{p_{\ell}} \big(A_{(h_1,\Theta^{(\kappa)}_\ell)}B_{(\Theta^{(\kappa)}_\ell,h_2)}\big)^2-(AB)_{(h_1,h_2)}^2\Big)\\
&\quad = \frac{1}{c}\sum_{\ell=1}^k\frac{1}{p_{\ell}}\|A_{(,\Theta^{(\kappa)}_\ell)}B_{(\Theta^{(\kappa)}_\ell,)}\|_F^2-\frac{\|AB\|_F^2}{c}: = \mathcal{U}(p).
\end{align*}
Note that $\mathcal{U}(p)$ characterizes the dependence of $\mathbb{E}_p[\|AB-\hat S\|_F^2]$ on $p$. In order to optimize  $\mathcal{U}(p)$, a Lagrange multiplier $\lambda$ is introduced with Lagrange function
\begin{align*}
\mathcal{L}(p;\lambda)=- \mathcal{U}(p)-\lambda\Big(\sum_{\ell=1}^k p_{\ell}-1\Big).
\end{align*}
 The method of Lagrange multiplier returns 
 \begin{align*}
p^{(\kappa)}_\ell = \frac{\big\|A_{(,\Theta^{(\kappa)}_\ell)}B_{(\Theta^{(\kappa)}_\ell,)}\big\|_F}{\sum_{\ell=1}^k\big\|A_{(,\Theta^{(\kappa)}_\ell)}B_{(\Theta^{(\kappa)}_\ell,)}\big\|_F}.
\end{align*}
Plugging optimal expression of $p$ into $\mathcal{U}(p)$ suggests that
\begin{align*} 
\mathbb{E}_{p^{(\kappa)}}[\|AB-{\hat S}\|^2_F]= \frac{1}{c}\big(\sum_{\ell=1}^k\big\|A_{(,\Theta^{(\kappa)}_\ell)}B_{(\Theta^{(\kappa)}_\ell,)}\big\|_F\big)^2-\frac{\|AB\|_F^2}{c}.
\end{align*}
\end{proof}

\begin{proof}[Proof of Corollary \ref{cor:largestapproerror}]
Directly subtracting \eqref{eqn:optimalvar} from \eqref{eqn:optimalvar1} yields that
\begin{align*}
&V_{p^{(o)}}(AB)-\mathbb{E}_{p^{(\kappa)}}[\|AB-{\hat S}\|^2_F]=\frac{1}{c}\big(\sum_{\ell=1}^n\|A_{(,\ell)}B_{(\ell,)}\|_F\big)^2-\frac{1}{c}\big(\sum_{\ell=1}^k\big\|A_{(,\Theta^{(\kappa)}_\ell)}B_{(\Theta^{(\kappa)}_\ell,)}\big\|_F\big)^2\\
&= \frac{1}{c}\Big(\sum_{\ell=1}^n\|A_{(,\ell)}B_{(\ell,)}\|_F-\sum_{\ell=1}^k\big\|A_{(,\Theta^{(\kappa)}_\ell)}B_{(\Theta^{(\kappa)}_\ell,)}\big\|_F\Big)\Big(\sum_{\ell=1}^n\|A_{(,\ell)}B_{(\ell,)}\|_F+\sum_{\ell=1}^k\big\|A_{(,\Theta^{(\kappa)}_\ell)}B_{(\Theta^{(\kappa)}_\ell,)}\big\|_F\Big).
\end{align*}
Note that
\begin{align*}
\sum_{\ell=1}^k\big\|A_{(,\Theta^{(\kappa)}_\ell)}B_{(\Theta^{(\kappa)}_\ell,)}\big\|_F=\sum_{\ell=1}^k\Big\|\sum_{\ell_i=1}^{|\Theta^{(\kappa)}_{\ell,}|} A_{(,\Theta^{(\kappa)}_{\ell,\ell_i})}B_{(\Theta^{(\kappa)}_{\ell,\ell_i},)}\Big\|_F\leq \sum_{\ell=1}^n\|A_{(,\ell)}B_{(\ell,)}\|_F
\end{align*}
by the property of norm. This implies that $V_{p^{(o)}}(AB)-\mathbb{E}_{p^{(\kappa)}}[\|AB-{\hat S}\|^2_F]\geq 0$.
\end{proof}

\begin{proof}[Proof of Proposition \ref{prop:2norm}]
Define a random index $r\in\{1,2,\ldots,k\}$, with $\mathbb{P}_p(r=\ell)=p_\ell$ and thus a random rectangular matrix $Y=\frac{1}{p_r}A_{(,\Theta^{(\kappa)}_r)}B_{(\Theta^{(\kappa)}_r,)}$. $Y-AB$ can be shown as zero mean matrix:
$$\mathbb{E}_{p}[Y]-AB=\sum_{\ell=1}^k p_\ell \frac{1}{p_\ell}A_{(,\Theta^{(\kappa)}_\ell)}B_{(\Theta^{(\kappa)}_\ell,)}-AB=0.$$
Besides, 
\begin{align*}
&\|AB-Y\|_2=\big\|AB-\frac{1}{p_r}A_{(,\Theta^{(\kappa)}_r)}B_{(\Theta^{(\kappa)}_r,)}\big\|_2\\
&\quad \leq \|AB\|_2 +\max_{r}\frac{1}{p_r}\big\|A_{(,\Theta^{(\kappa)}_r)}B_{(\Theta^{(\kappa)}_r,)}\big\|_F= \|AB\|_2 +\mathcal{U}_1(p).
\end{align*}
To apply with noncommutative Bernstein inequality, we also need to estimate
\begin{align*}
&\|\mathbb{E}_{p}[(Y-AB)^T(Y-AB)]\|_2\\
&\quad =\big\| \sum_{\ell=1}^k p_\ell \big[\big(\frac{1}{p_\ell}A_{(,\Theta^{(\kappa)}_\ell)}B_{(\Theta^{(\kappa)}_\ell,)}-AB\big)^T\big(\frac{1}{p_\ell}A_{(,\Theta^{(\kappa)}_\ell)}B_{(\Theta^{(\kappa)}_\ell,)}-AB\big)\big]\big\|_2\\
&\quad \leq  \sum_{\ell=1}^k p_\ell \big( \|AB\|_2 +\frac{1}{p_\ell}\big\|A_{(,\Theta^{(\kappa)}_\ell)}B_{(\Theta^{(\kappa)}_\ell,)}\big\|_F\big)^2 \\
&\quad =\|AB\|^2_2+2\|AB\|_2 \sum_{\ell=1}^k\big\|A_{(,\Theta^{(\kappa)}_\ell)}B_{(\Theta^{(\kappa)}_\ell,)}\big\|_F + \sum_{\ell=1}^k \frac{1}{p_\ell}\big\|A_{(,\Theta^{(\kappa)}_\ell)}B_{(\Theta^{(\kappa)}_\ell,)}\big\|_F^2\\
&\quad=\|AB\|^2_2+2M\|AB\|_2+ \mathcal{U}_2(p) ,
\end{align*}
and the same bound holds for $ \|\mathbb{E}_{p}[(Y-AB)(Y-AB)^T]\|_2$. Now sample $c$ copies of $Y$. Let $\bar Y=\sum_{j=1}^c\frac{Y_j}{c}$. For any $\epsilon>0$, applying with noncommutative Bernstein inequality yields that
\begin{align*}
\mathbb{P}_p\big(\|\bar Y-AB\|_2>\epsilon\big)\leq (m+\rho)\exp\Big(-\frac{c\epsilon^2}{ 2(\|AB\|^2_2+2M\|AB\|_2+ \mathcal{U}_2(p))^2+ \epsilon ( \|AB\|_2 +\mathcal{U}_1(p))}\Big).
\end{align*}
\end{proof}
\begin{proof}[Proof of Proposition \ref{prop:max1}]
By the property of norm we have that
\begin{align*}
&\|\hat S\|_F\leq \frac{1}{c}\sum_{i=1}^c \frac{1}{p^{(\kappa)}_{r_i}}\|A_{(,\Theta^{(\kappa)}_{r_i})}B_{(\Theta^{(\kappa)}_{r_i},)}\|_F=\frac{1}{c}\sum_{i=1}^c\sum_{\ell=1}^k \|A_{(,\Theta^{(\kappa)}_\ell)}B_{(\Theta^{(\kappa)}_\ell,)}\|_F=\sum_{\ell=1}^k \|A_{(,\Theta^{(\kappa)}_\ell)}B_{(\Theta^{(\kappa)}_\ell,)}\|_F,
\end{align*}
the first equality is true by substituting formula for $p^{(\kappa)}$. 
\end{proof}
\begin{proof}[Proof of Proposition \ref{prop:max}]
Applying induced-norm to the alternative expression of $\hat A$ in \eqref{hatSmatrix} yields 
\begin{align*}
&\|\hat S\|_2=\|AWB\|_2\leq \|A\|_2\|B\|_2\|W\|_2,
\end{align*}
where $W\dot =DC^2D^T$. $W$ returns a $n\times n$ diagonal matrix with nonnegative entries $(\Theta^{(\kappa)}_{r_i,d_i},\Theta^{(\kappa)}_{r_i,d_i})$ for $1\leq i\leq c$ and $1\leq d_i\leq |\Theta^{(\kappa)}_{r_i}|$ taking value on a ratio of a scalar, which indicates how many times $\Theta^{(\kappa)}_{r_i}$ has been drawn, to $\frac{c}{k}$. The probability that at least one of the indices has been drawn $s^{(k)}_c$ times must be smaller than
$$\sum_{\ell=1}^k \mathcal{C}_{c}^{s^{(k)}_c} (\frac{1}{k})^{s^{(k)}_c}(1-\frac{1}{k})^{c-s^{(k)}_c},$$
where $\mathcal{C}_c^{s^{(k)}_c}$ is the  number of combinations of choosing $s^{(k)}_c$ among $c$ In summary, we have that
\begin{align*}
&\mathbb{P}_p(\text{no indices has been drawn more than }s^{(k)}_c-1\ \text{times})\\
&\geq  1-\sum_{i\geq s^{(k)}_c}\mathbb{P}_p(\text{at least one index has been drawn more than } i\ \text{times})\\
&\geq 1-\sum_{i\geq s^{(k)}_c}\sum_{\ell=1}^k \mathcal{C}_{c}^{i} \big(\frac{1}{k}\big)^{i}\big(1-\frac{1}{k}\big)^{c-i}\\
&=  1-\sum_{i-1\geq s^{(k)}_c-1} \frac{c}{i}\mathcal{C}_{c-1}^{i-1} \big(\frac{1}{k}\big)^{i-1}(1-\frac{1}{k})^{(c-1)-(i-1)}\\
& \geq 1-\frac{c}{s^{(k)}_c}\big(1-F_{\text{b}}\big(s^{(k)}_c-2;c-1,\frac{1}{k}\big)\big)\geq 0.99.
\end{align*}
\end{proof}
\begin{proof}[Proof of Corollary \ref{cor:argvar}]
First note that $p^{(o)}$ is given in \eqref{optimalmatrix} as
 \begin{align}\label{eqn:optimaln}
p^{(o)}_\ell = \frac{\|A_{(,\ell)}\|_F\|B_{(\ell,)}\|_F}{\sum_{\ell=1}^k\|A_{(,\ell)}\|_F\|B_{(\ell,)}\|_F}= \frac{\|A_{(,\ell)}\|_2\|B_{(\ell,)}\|_2}{\sum_{\ell=1}^k\|A_{(,\ell)}\|_2\|B_{(\ell,)}\|_2}.
\end{align}
Now fix $\Theta^{(\text{pair})}$. Then substituting the expression of $p^{(o)}$ into \eqref{eqn:var} yields
\begin{align*}
&\mathbb{E}_{p^{(o)}}[\|AB-\hat S\|_F]=  \frac{1}{c}\sum_{\ell=1}^{\frac{n}{2}}\frac{1}{p^{(o)}_{I_{2\ell-1}}+p^{(o)}_{I_{2\ell}}}\big\|A_{(,\{I_{2\ell-1},I_{2\ell}\})}B_{(\{I_{2\ell-1},I_{2\ell}\},)}\big\|_F^2-\frac{\|AB\|_F^2}{c}\\
\leq&\frac{1}{c}\sum_{\ell=1}^{\frac{n}{2}}\frac{1}{p^{(o)}_{I_{2\ell-1}}+p^{(o)}_{I_{2\ell}}}\big(\|A_{(,I_{2\ell-1})}\|_2\|B_{(I_{2\ell-1},)}\|_2+\|A_{(,I_{2\ell})}\|_2\|B_{(I_{2\ell},)}\|_2)^2-\frac{\|AB\|_F^2}{c}\\
=& \frac{1}{c}\Big(\sum_{j=1}^n\|A_{(,I_j)}\|_2\|B_{(I_j,)}\|_2\Big)\sum_{\ell=1}^{\frac{n}{2}}\frac{\big(\|A_{(,I_{2\ell-1})}\|_2\|B_{(I_{2\ell-1},)}\|_2+\|A_{(,I_{2\ell})}\|_2\|B_{(I_{2\ell},)}\|_2)^2}{\|A_{(,I_{2\ell-1})}\|_2\|B_{(I_{2\ell-1},)}\|_2+\|A_{(,I_{2\ell})}\|_2\|B_{(I_{2\ell},)}\|_2}-\frac{\|AB\|_F^2}{c}\\
=&V_{p^{(o)}}(AB).
\end{align*}
\end{proof}
\begin{proof}[Proof of Corollary \ref{cor:lowerbound}] The argument is similar as in the proof of Proposition \ref{prop:2norm}. Denote by 
Define a random index $r\in\{1,2,\ldots,n\}$, with $\mathbb{P}_{p^{(o)}}(r=\ell)=p^{(o)}_\ell$ and thus a random rectangular matrix $D=\frac{1}{p_r}A_{(,r)}B_{(r,)}$. Meanwhile, define another random index $t\in\{1,2,\ldots,\frac n 2\}$, with $\mathbb{P}_{p^{(o)}}(t=i)=p^{(o)}_{I_i}+p^{(o)}_{I_{n-i}}$ and thus a random  rectangular matrix $G=\frac{1}{(p^{(o)}_{I_{t}}+p^{(o)}_{I_{n-t}})}A_{(,\{I_{t},I_{n-t}\})}B_{(\{I_t,I_{n-t}\},)}$. It is easy to check that $D-AB$ and $G-AB$ are zero mean matrices. Now assume we have $c$ copies of $D$ and $G$ termed as $D_1, D_2,\ldots,D_c$ and $G_1, G_2,\ldots,G_c$. To apply with noncommutative Bernstein inequality and eventually justify $\big\|\frac{1}{c}\sum_{k=1}^c G_k-AB\big\|_2\leq \epsilon$ has larger lower bound than $\big\|\frac{1}{c}\sum_{k=1}^c D_k-AB\big\|_2\leq \epsilon$, we have to show two things:
\begin{enumerate}
\item if $\|D_k-AB\|_2\leq M_1$ and $\|G_k-AB\|_2\leq M_2$ a.s, for all $k$, then $M_2\leq M_1$;
\item  if define $$u_1=\max \big\{\|\mathbb{E}_{p^{(o)}}[(D-AB)^T(D-AB)]\|_2, \|\mathbb{E}_{p^{(o)}}[(D-AB)(D-AB)^T]\|_2\big\}$$ and 
$$u_2=\max \big\{\|\mathbb{E}_{p^{(o)}}[(G-AB)^T(G-AB)]\|_2, \|\mathbb{E}_{p^{(o)}}[(G-AB)(G-AB)^T]\|_2\big\},$$ then $u_2\leq u_1$.
\end{enumerate}
In the following we are going to verify the two points:
\begin{enumerate}
\item first note that
\begin{align*}
&\|AB-D\|_2=\big\|AB-\frac{1}{p^{(o)}_r}A_{(,r)}B_{(r,)}\big\|_2\leq \big\|\sum_{\ell=1,\ell\neq r}^n A_{(,\ell)}B_{(\ell,)}\big\|_2+\frac{(1-p^{(o)}_r)}{p^{(o)}_r}\big\|A_{(,r)}B_{(r,)}\big\|_2\\
&\quad \leq \sum_{\ell=1,\ell\neq r}^n\|A_{(,\ell)}\|_2\|B_{(\ell,)}\|_2+\frac{(1-p^{(o)}_r)}{p^{(o)}_r}\|A_{(,r)}\|_2\|B_{(r,)}\|_2=2\sum_{\ell=1,\ell\neq r}^n\|A_{(,\ell)}\|_2\|B_{(\ell,)}\|_2,
\end{align*}
where the last line holds by plugging $p^{(o)}$, i.e., Eqn. \eqref{optimalmatrix1}, into the penultimate line. Now define 
$$M_1:=2\max_{r}\sum_{\ell=1,\ell\neq r}^n\|A_{(,\ell)}\|_2\|B_{(\ell,)}\|_2.$$
Besides, we have that (assume $r$ is odd)
\begin{align*}
&\|AB-G\|_2=\Big\|AB-\frac{1}{(p^{(o)}_{I_r}+p^{(o)}_{I_{r+1}})}A_{(,\{I_r,I_{r+1}\})}B_{(\{I_r,I_{r+1}\},)}\Big\|_2\\
& \leq \Big\|\sum_{\ell=1,\text{odd},\ell\neq r}^{n} A_{(,\{I_\ell,I_{\ell+1}\})}B_{(\{I_\ell,I_{\ell+1}\},)}\Big\|_2+\frac{(1-p^{(o)}_{I_r}-p^{(o)}_{I_{r+1}})}{(p^{(o)}_{I_r}+p^{(o)}_{I_{r+1}})}\big\|A_{(,\{I_r,I_{r+1}\})}B_{(\{I_r,I_{r+1}\},)}\big\|_2\\
& \leq \sum_{\ell=1,\text{odd},\ell\neq r}^{n } (\|A_{(,I_{\ell})}\|_2\|B_{(I_{\ell},)}\|_2+\|A_{(,I_{\ell+1})}\|_2\|B_{(I_{\ell+1},)}\|_2)\\
&\quad +\frac{(1-p^{(o)}_{I_r}-p^{(o)}_{I_{r+1}})}{(p^{(o)}_{I_r}+p^{(o)}_{I_{r+1}})}(\|A_{(,I_r)}\|_2\|B_{(I_r,)}\|_2+\|A_{(,I_{r+1})}\|_2\|B_{(I_{r+1},)}\|_2)\\
& =2\sum_{\ell=1,\ell\neq I_r, I_{r+1}}^n\|A_{(,\ell)}\|_2\|B_{(\ell,)}\|_2,
\end{align*}
where the last line holds by plugging $p^{(o)}$, i.e., Eqn. \eqref{eqn:optimaln}, into the penultimate line. Now define 
$$M_2:=2\max_r \sum_{\ell=1,\ell\neq I_r,I_{r+1}}^n\|A_{(,\ell)}\|_2\|B_{(\ell,)}\|_2.$$
Apparently we can conclude that $M_2\leq M_1$. Note that, among all possible pairing strategies, $M_2$ achieves its minimum only if pairing like enhanced pairwise partition. 
\item  let us first examine $\|\mathbb{E}_{p^{(o)}}[(D-AB)^T(D-AB)]\|_2$, the bound of which can be applied to $\|\mathbb{E}_{p^{(o)}}[(D-AB)(D-AB)^T]\|_2$ as well. The technique is quite similar as that in the proof of proposition \ref{prop:2norm} and the estimate from last step is adopted as well.
\begin{align*}
&\|\mathbb{E}_{p^{(o)}}[(D-AB)^T(D-AB)]\|_2\\
&=\Big\| \sum_{\ell=1}^n \Big\{p^{(o)}_\ell \big[\big(\frac{1}{p^{(o)}_\ell}A_{(,\ell)}B_{(\ell,)}-AB\big)^T\big(\frac{1}{p^{(o)}_\ell}A_{(,\ell)}B_{(\ell,)}-AB\big)\big]\Big\}\Big\|_2\\
&\leq  \sum_{\ell=1}^n \Big\{p^{(o)}_\ell \big\|\frac{1}{p^{(o)}_\ell}A_{(,\ell)}B_{(\ell,)}-AB\big\|^2_2\Big\}\\
&\leq 4\sum_{\ell=1}^n \Big\{p^{(o)}_\ell \Big(\sum_{i=1,i\neq \ell}^n\|A_{(,i)}\|_2\|B_{(i,)}\|_2\Big)^2\Big\}\\
&=\frac{4}{\sum_{j=1}^n\|A_{(,j)}\|_2\|B_{(j,)}\|_2}\sum_{\ell=1}^n\Big\{ \|A_{(,\ell)}\|_2\|B_{(\ell,)}\|_2\Big(\sum_{i=1,i\neq \ell}^n\|A_{(,i)}\|_2\|B_{(i,)}\|_2\Big)^2\Big\},
\end{align*}
where the last inequality is true by substituting bound from last step, and last equality holds by plugging $p^{(o)}$, i.e., Eqn. \eqref{optimalmatrix1}, into the penultimate line. Now define 
\begin{align*}
&u_1:=\frac{4}{\sum_{j=1}^n\|A_{(,j)}\|_2\|B_{(j,)}\|_2}\sum_{\ell=1}^n \Big\{\|A_{(,\ell)}\|_2\|B_{(\ell,)}\|_2\Big(\sum_{i=1,i\neq \ell}^n\|A_{(,i)}\|_2\|B_{(i,)}\|_2\Big)^2\Big\}.
\end{align*}
Regarding the estimation for $\|\mathbb{E}_{p^{(o)}}[(G-AB)^T(G-AB)]\|_2$, similar argument applies and we can deduce that
\begin{align*}
&u_2:=\frac{4}{\sum_{j=1}^n\|A_{(,j)}\|_2\|B_{(j,)}\|_2} \sum_{\ell=1}^{\frac n 2} \Big\{\big(\|A_{(,I_{2\ell-1})}\|_2\|B_{(I_{2\ell-1},)}\|_2\\
&\quad\quad +\|A_{(,I_{2\ell})}\|_2\|B_{(I_{2\ell},)}\|_2\big)\Big(\sum_{i=1,i\neq I_{2\ell-1},I_{2\ell}}^n\|A_{(,i)}\|_2\|B_{(i,)}\|_2\Big)^2\Big\}.
\end{align*}
It is not hard to conclude that $u_1\geq u_2$. Besides, among all pairing strategies, $u_2$ its minimum only if pair partition is given as enhanced pairwise partition. 
\end{enumerate}
\end{proof}

\end{document}